\newtheorem{thm}{Theorem}[section]
\newtheorem{lem}[thm]{Lemma}
\newtheorem{cor}[thm]{Corollary}
\newtheorem{pro}[thm]{Proposition}
\newtheorem{ex}[thm]{Example}
\newtheorem{defi}[thm]{Definition}
\numberwithin{equation}{section}
\newcommand {\emptycomment}[1]{}
\newcommand{\lon }{\,\rightarrow\,}
\newcommand{\be }{\begin{equation}}
\newcommand{\ee }{\end{equation}}
\newcommand{\g}{\frkg}
\newcommand{\huaC}{{\mathcal{C}}}
\def\a{\alpha}
\def\b{\beta}
\def\f{\phi}
\def\p{\psi}
\def\r{\rho}
\def\P{\Pi}
\def\c{\cdot}
\newcommand{\frkg}{\mathfrak g}
\newcommand{\br}[1]{   [ \cdot,    \cdot  ]_\frkg   }
\newcommand{\Id}{\rm{Id}}
\newcommand{\gl}{\mathfrak {gl}}
\newcommand{\ad}{\mathrm{ad}}
\begin{document}
\title{Cohomology and linear deformation of BiHom-left-symmetric algebras}
\author{\bf A. B. Hassine, T. Chtioui, S. Mabrouk, O. Ncib}
\author{{A. B. Hassine$^{1,2}$
 \footnote{Corresponding author,  E-mail: benhassine.abdelkader@yahoo.fr }
, T. Chtioui$^{2}$
 \footnote{Corresponding author,  E-mail: chtioui.taoufik@yahoo.fr}
, S. Mabrouk$^{3}$
 \footnote{Corresponding author,  E-mail: Mabrouksami00@yahoo.fr}
, O. Ncib $^{3}$ }
 \footnote{Corresponding author,  E-mail: othmenncib@yahoo.fr}
\\
{\small 1.  Department of Mathematics, Faculty of Science and Arts at Belqarn,}\\ {\small P. O. Box 60, Sabt Al-Alaya 61985, University of Bisha, Saudi Arabia}\\
{\small 2.  Faculty of Sciences, University of Sfax, BP 1171, 3000 Sfax, Tunisia}\\
{\small 3.  University of Gafsa, Faculty of Sciences Gafsa, 2112 Gafsa, Tunisia}
}
\date{}
\maketitle
\begin{abstract}
The aim of this work is to introduce representations  of   BiHom-left-symmetric algebras. and develop its cohomology theory. As applications, we study linear deformations of BiHom-left-symmetric algebras, which are characterized by its second cohomology group with the coefficients in the adjoint representation. The notion of a Nijenhuis operator on a BiHom-left-symmetric algebra is introduced. We will prove that it  can generate trivial linear deformations of a BiHom-left-symmetric algebra.
\end{abstract}

\textbf{Key words}: BiHom-Lie algebras, BiHom-left-symmetric algebras, representations, cohomology, linear deformation.\\
\textbf{Mathematics Subject Classification}: 17D05,17D10,17A15.

\section*{Introduction}
Left-symmetric algebras $($called also pre-Lie algebras$)$ arise in many areas of mathematics and physics. They have already been introduced by A. Cayley in 1896, in the context of
rooted tree algebras. Then they were forgotten for a long time until Vinberg
in 1960  and Koszul in 1961who introduced them in the
context of convex homogeneous cones.

Representations and cohomology theory  of left-symmetric algebras were studied by  Askar Dzhumadil'daev in \cite{DA}.  Recently,  representations and cohomology of Hom-left-symmetric algebras were developed  in  \cite{sheng3,rephomprelie}.

In the present paper, we will study the representation and the cohomology theory of BiHom-left-symmetric algebras.  Recall that  a
 BiHom-algebra is an algebra in such a way that the identities defining the structure are
twisted by two homomorphisms $\a$ and $\b$. This class of algebras was introduced from a categorical approach in \cite{bihomass}  as an extension of the class of Hom-algebras \cite{MS2}.  Other classes of BiHom-algebras can be found in \cite{bihomalt,bihomprealt,bihomliesuper}.

The notion of a BiHom-left-symmetric algebra (or BiHom-pre-Lie algebras) was introduced in \cite{Liu&Makhlouf&Menini&Panaite}.

 There is a close relationship between BiHom-left-Lie algebras and BiHom-Lie algebras: a BiHom-left-symmetric algebra $(A,\cdot,\alpha,\b)$ gives rise to a BiHom-Lie algebra $(A,[\cdot,\cdot]_C,\alpha,\b)$ via the commutator bracket (in the BiHom sense), which is called the sub-adjacent BiHom-Lie algebra and denoted by $A^C$. On the other hand, any BiHom-Lie algebra gives rise to a BiHom-left-symmetric algebra using the Rota-Baxter operator.

Despite there are important applications of BiHom-left-symmetric algebras,   cohomologies of BiHom-left-symmetric algebras have not been well developed due to its complexity. The aim of this work  is to develop  representations and cohomology theories of this class of algebras and give some of  its applications.

In the present work, we give  the natural formula of the representation of a BiHom-left-symmetric algebra and provide some related results. Moreover, we show that there is well-defined tensor product of two representations of a BiHom-left-symmetric algebra. This is the content of Section 2.

In Section 3, we give the cohomology theory of a BiHom-left-symmetric algebra $(A,\cdot,\a,\b)$ with the coefficient in a representation $(V,L,R,\f,\p)$. The main contribution is to define the coboundary operator $\partial: C^n(A;V)\longrightarrow C^{n+1}(A;V)$.

In Section 4, we study linear deformations of a BiHom-left-symmetric algebra using the cohomology with the coefficient in the adjoint representation defined in Section 3.
We introduce the notion of a Nijenhuis operator on a BiHom-left-symmetric algebra and show that it gives rise to a trivial deformation. Finally, we study the relation between linear deformations of a BiHom-left-symmetric algebra and linear deformations of its sub-adjacent BiHom-Lie algebra.

We work over a base field $\mathbb{K}$ of characteristic zero. All algebras, linear spaces etc. will be over $\mathbb{K}$.
Throughout the paper, all the BiHom-algebras are considered regular.

\section{Preliminaries}
In this section, we recall definition, representations and some key results of BiHom-Lie algebras \cite{bihomass} and introduce the notion of an $\mathcal{O}$-operator on a BiHom-Lie algebra.

\begin{defi}
A BiHom-Lie algebra is a tuple  $(\g,[\cdot,\cdot],\a,\b)$ consisting of a linear space $\g$, a linear map $[\cdot,\cdot]:\otimes^2\g\longrightarrow \g$ and two algebra morphisms $\a,\b:\g\longrightarrow \g $, satisfying
\begin{align}
 \a \circ \b =& \b \circ \a, \nonumber\\
 [\b(x),\a(y)]=& -[\b(y),\a(x)],\nonumber\\
 \circlearrowleft_{x,y,z \in \g} [\b^2(x)&,[\b(y),\a(x)]]= 0\label{Bihom-jaco},
\end{align}
for any $x,y,z \in \g$.
\end{defi}

\begin{defi}
A morphism of BiHom-Lie algebra $f:(\g,[\cdot,\cdot]_\g,\a,\b)\longrightarrow (\mathfrak{g'},[\cdot,\cdot]_\mathfrak{g'},\a',\b')$ is a linear map $f:\g\longrightarrow \mathfrak{g'}$ such that
 \begin{eqnarray*}
 &&f[x,y]_\g=[f(x),f(y)]_\mathfrak{g'},\quad \forall~x,y\in \g,\\
&& f\circ \alpha=\a' \circ f,  ~~ f\circ \b=\b' \circ f.
  \end{eqnarray*}
  \end{defi}

 \begin{defi}\label{defi:bihom-lie representation}
 A representation of a BiHom-Lie algebra $(\g,[\cdot,\cdot],\alpha,\beta)$ on
 a vector space $V$ with respect to commuting linear maps $\phi,\psi:V\rightarrow V$ is a linear map
  $\rho:\g\longrightarrow \gl(V)$, such that for all
  $x,y\in \g$, the following equalities are satisfied
\begin{eqnarray}
\label{bihom-lie-rep-1}\rho(\alpha(x))\circ \f&=&\f\circ \rho(x),\\
\label{bihom-lie-rep-2} \rho(\b(x))\circ \p&=&\p \circ \rho(x),\\
\label{bihom-lie-rep-3}\rho([\b(x),y])\circ  \p &=&\rho(\alpha \b(x))\circ\rho(y)-\rho(\b(y))\circ\rho(\a(x)).
\end{eqnarray}
  \end{defi}
We denote such a representation by $(V,\r,\f,\p)$. For all $x\in\mathfrak{g}$, we define $\ad_{x}:\mathfrak{g}\lon \mathfrak{g}$ by
\begin{eqnarray*}
\ad_{x}(y)=[x,y],\quad\forall y \in \mathfrak{g}.
\end{eqnarray*}
Then $\ad:\g\longrightarrow\gl(\frak g)$ is a representation of the BiHom-Lie algebra $(\mathfrak{g},[\cdot,\cdot],\a,\b)$ on $\g$ with respect to $\alpha$ and $\b$,  which is called the adjoint representation.

\begin{lem}\label{lem:semidirectp}
Let $(\g,[\cdot,\cdot],\alpha,\b)$ be a BiHom-Lie algebra, $(V,\f,\p)$  a vector space with two commuting linear transformations and $\rho:
\g\rightarrow \gl(V)$ a linear
map. Then $(V,\r,\f,\p)$ is a representation of $(\g,[\cdot,\cdot],\a,\b)$ if and only if $(\g\oplus V,[\cdot,\cdot]_\rho,\alpha+\f,\b+\p)$ is a BiHom-Lie algebra, where $[\cdot,\cdot]_\rho$, $\alpha+\f$ and $\b+\p$ are defined by
\begin{eqnarray}\label{eq:sum}
[x+u,y+v]_{\rho}&=&[x,y]+\rho(x)v-\rho(\a^{-1}\b(y))\f\p^{-1}u,\\
(\alpha+\f)(x+u)&=&\alpha(x)+\f(u),\\
(\b+\p)(x+u)&=&\b(x)+\p(u),
\end{eqnarray}
for all $x,y\in \g,~u,v\in V$.
\end{lem}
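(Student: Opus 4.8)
The plan is to check the three defining requirements of a BiHom-Lie algebra---commutativity of the structure maps, BiHom-skew-symmetry, and the BiHom-Jacobi identity \eqref{Bihom-jaco}---together with the morphism property of $\a+\f$ and $\b+\p$, for the tuple $(\g\oplus V,[\cdot,\cdot]_\rho,\a+\f,\b+\p)$, reading off at each stage the exact representation condition \eqref{bihom-lie-rep-1}--\eqref{bihom-lie-rep-3} that it forces. Since every identity below is an equality that holds \emph{if and only if} the corresponding condition on $\rho$ holds, a single computation settles both implications. Throughout I use that the algebras are regular, so that $\a,\b$ are invertible with $\a^{-1},\b^{-1}$ again morphisms, together with $\a\b=\b\a$ on $\g$ and $\f\p=\p\f$ on $V$.

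First I would clear the structural requirements. Commutativity $(\a+\f)(\b+\p)=(\b+\p)(\a+\f)$ is immediate from $\a\b=\b\a$ and $\f\p=\p\f$ and uses no hypothesis on $\rho$. Expanding \eqref{eq:sum}, the statement that $\a+\f$ is a morphism for $[\cdot,\cdot]_\rho$ splits into a $\g$-part, true because $\a$ is a morphism of $(\g,[\cdot,\cdot])$, and a $V$-part; after simplifying $\a\a^{-1}\b=\b$ and moving $\f$ past $\p^{-1}$, the $V$-part collapses to exactly $\rho(\a(x))\circ\f=\f\circ\rho(x)$, that is \eqref{bihom-lie-rep-1}. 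The identical computation for $\b+\p$ returns \eqref{bihom-lie-rep-2}. The BiHom-skew-symmetry $[\b(X),\a(Y)]_\rho=-[\b(Y),\a(X)]_\rho$ for $X=x+u$, $Y=y+v$ then holds automatically: its $\g$-part is the skew-symmetry of $[\cdot,\cdot]$ on $\g$, while the two $V$-parts cancel once $\a^{-1}\b\a=\b$ and $\f\p^{-1}\p=\f$ are applied, so no further condition on $\rho$ appears.

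The heart of the argument is the BiHom-Jacobi identity $\circlearrowleft_{x,y,z}[\b^2(X),[\b(Y),\a(Z)]_\rho]_\rho=0$ on three general elements $X=x+u$, $Y=y+v$, $Z=z+w$. I would first compute the inner bracket $[\b(Y),\a(Z)]_\rho$, then the outer bracket, and split the cyclic sum into its $\g$- and $V$-components. The $\g$-component is the Jacobi identity of $(\g,[\cdot,\cdot],\a,\b)$ and so vanishes. For the $V$-component I would collect the nine resulting terms according to the coefficient $\f(u)$, $\f(v)$ or $\f(w)$ they carry; each of the three groups has the same shape, the $\f(u)$-group being
\[
-\rho\big(\a^{-1}\b[\b(y),\a(z)]\big)\f\p(u)+\rho(\b^2(y))\rho(\b(z))\f(u)-\rho(\b^2(z))\rho(\b(y))\f(u).
\]
Using morphism-invariance to rewrite $\a^{-1}\b[\b(y),\a(z)]=[\b(\a^{-1}\b(y)),\b(z)]$ and then applying \eqref{bihom-lie-rep-3} with $x\mapsto\a^{-1}\b(y)$ and $y\mapsto\b(z)$ gives $\rho(\a^{-1}\b[\b(y),\a(z)])\circ\p=\rho(\b^2(y))\rho(\b(z))-\rho(\b^2(z))\rho(\b(y))$; since $\f\p=\p\f$, this makes the group vanish identically. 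The remaining two groups vanish by cyclic relabelling, so \eqref{bihom-lie-rep-3} is precisely what the $V$-component requires.

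For the converse one reads the same equalities backwards: setting $u=v=w=0$ recovers the axioms of $\g$, while taking elements supported in $V$ isolates \eqref{bihom-lie-rep-1}--\eqref{bihom-lie-rep-3}, so that the BiHom-Lie structure on $\g\oplus V$ forces $(V,\r,\f,\p)$ to be a representation. The step I expect to be the main obstacle is the bookkeeping in the $V$-component of the Jacobi identity: one must track carefully where the twisting maps $\a^{-1},\b,\f,\p$ land and repeatedly use regularity---to pass $\a^{-1}$ through a bracket as a morphism---and the commutation $\f\p=\p\f$ to align the terms so that \eqref{bihom-lie-rep-3} applies verbatim. Once the grouping above is in place the cancellations are forced, and the equivalence follows.
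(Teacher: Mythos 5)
Your proof is correct. Note that the paper actually states Lemma \ref{lem:semidirectp} without any proof, so there is nothing to compare against directly; your componentwise verification is exactly the style of argument the paper uses for the analogous statement about BiHom-left-symmetric algebras (Proposition \ref{direct-product}), and all the key steps check out: the morphism conditions for $\a+\f$ and $\b+\p$ reduce precisely to \eqref{bihom-lie-rep-1} and \eqref{bihom-lie-rep-2}, the skew-symmetry is automatic after simplifying $\a^{-1}\b\a=\b$ and $\f\p^{-1}\p=\f$, and the $V$-component of the BiHom-Jacobi identity, grouped by the coefficients $\f(u),\f(v),\f(w)$, is exactly \eqref{bihom-lie-rep-3} after the rewriting $\a^{-1}\b[\b(y),\a(z)]=[\b(\a^{-1}\b(y)),\b(z)]$. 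For the converse, to recover \eqref{bihom-lie-rep-3} verbatim from the vanishing of the $\f(u)$-group you should make explicit that you cancel the invertible $\f$ on the right and substitute $y\mapsto\a\b^{-1}(y)$, both legitimate under the standing regularity assumption; with that remark the argument is complete.
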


This BiHom-Lie algebra is called the semidirect product of $(\g,[\cdot,\cdot],\alpha,\b)$ and $(V,\f,\p)$ and denoted by $\g\ltimes_\rho V.$

Now we introduce the definition  of an $\mathcal{O}$-operator on a BiHom-Lie algebra associated to a given representation, which generalize the Rota-Baxter operator of weight $0$ introduced in \cite{Liu&Makhlouf&Menini&Panaite} and defined as a linear operator $R$ on a BiHom-Lie algebra  $(\g,[\cdot,\cdot],\alpha,\beta)$ such that $R\circ \a=\a \circ R$, $R \circ \b=\b \circ R$ and
$$[R(x),R(y)]=R([R(x),y]+[x,R(y)]),\ \forall x,y \in \g.$$
\begin{defi}
Let $(\g,[\cdot,\cdot],\alpha,\beta)$ be a BiHom-Lie algebra and $(V,\rho,\f,\p)$ be a representation.  A linear map $T: V \to A$ is called an $\mathcal{O}$-\textbf{operator}
associated to $\rho$, if it satisfies
\begin{align}\label{O-operator}
    [T(u),T(v)]=T(\rho(T(u))v-\rho(T(\f^{-1}\p(v)))\f\p^{-1}(u)),\ \ \forall u,v \in V.
\end{align}
\end{defi}

Let $(A,\cdot)$ be a left-symmetric algebra. The commutator $[x,y]_A=x\cdot y-y\cdot x$ defines a Lie algebra structure on $A$, which is called the sub-adjacent Lie algebra of $A$ and denoted by $\g(A)$. Furthermore, $L:A\longrightarrow \gl(A)$ with $L(x) y=x\cdot y$ gives a representation of the Hom-Lie algebra $\g(A)$ on $A$.

\begin{defi}
Let $(A,\cdot)$ be a left-symmetric algebra and $V$ a vector space.  A representation of $A$ on $V$ consists of a pair $(L,R)$, where $L: A\longrightarrow \gl(V)$ is a representation of the Lie algebra $\g(A)$ on $A$ and $R:A\longrightarrow \gl(V)$ is a linear map satisfying
\begin{eqnarray}
 \L(x)\circ R(y)-R(y)\circ L(x)=R(x\cdot y)-R(y)\circ L(x),\quad \forall~x,y\in A.
\end{eqnarray}
  \end{defi}
Usually, we denote a representation by $(V,L,R)$. Define $\ell,r:A\longrightarrow\gl(A)$ by $\ell(x)y=x\cdot y=r(y)x$. Then $(A,\ell,r)$ is a representation of $(A,\cdot)$ called the adjoint representation.

\section{Representations of BiHom-left-symmetric algebras}

Let $(A,\cdot,\alpha,\b)$ be a BiHom-left-symmetric algebra. We always assume that it is regular, i.e. $\alpha, \b$ are invertible. The commutator $[x,y]_C=x\cdot y-\a^{-1}\b(y)\cdot\a\b^{-1} ( x)$ gives a BiHom-Lie algebra $(A,[\cdot,\cdot]_C,\alpha,\b)$, which is denoted by $A^C$ and called  the sub-adjacent BiHom-Lie algebra of $(A,\cdot,\alpha,\b)$. For more details see \cite{Liu&Makhlouf&Menini&Panaite}.

\begin{defi}
A BiHom-left-symmetric algebra is a tuple
$(A,\cdot,\a,\b)$ consisting of a vector space $A$ where $\cdot: A\otimes A \to A$ is a linear map and  $\a,\b: A \to A$ are two linear maps  such that for any $x,y,z \in A$
\begin{align}\label{id-Bihom-pre}
 \a \circ \b= &\b \circ \a,\\
 \a(x \cdot y)=\a(x)\cdot \a(y),  & \b(x \cdot y)=\b(x)\cdot \b(y),\\
(\b(x)\cdot \a(y))\cdot \a\b(z)-\a\b(x)\cdot (\a(y)\cdot z)=&(\b(y)\cdot \a(x))\cdot \a\b(z)-\a\b(y)\cdot (\a(x)\cdot z).
\end{align}
\end{defi}

\begin{defi}
 A morphism from a BiHom-left-symmetric algebra $(A,\cdot,\alpha,\b)$ to a BiHom-left-symmetric algebra $(A',\cdot',\alpha',\b')$ is a linear map $f:A\longrightarrow A'$ such that for all
  $x,y\in A$, the following equalities are satisfied
\begin{eqnarray}
\label{bihomo-1}f(x\cdot y)&=&f(x)\cdot' f(y),\\
\label{bihomo-2}f\circ \alpha&=&\alpha'\circ f,\\
\label{bihomo-3}f\circ \b&=&\b'\circ f.
\end{eqnarray}
\end{defi}

\begin{defi}\label{defi:hom-pre representation}
 A representation of a BiHom-left-symmetric algebra $(A,\cdot,\alpha,\b)$ on a vector space $V$ with respect to two commuting linear maps $\phi,\psi:V\rightarrow V$ consists of a pair $(L,R)$, where $L,R:A\longrightarrow \gl(V)$ are two linear maps satisfying, for all $x,y\in A$,
\begin{align}
&\f L(x)=L(\a(x))\f,~~  \p L(x)=L(\b(x))\p, ~~ \f R(x)=R(\a(x))\f,~~  \p R(x)=R(\b(x))\p,\label{rep1}\\
&L(\b(x)\cdot \a(y))\p-L(\a\b(x))L(\a(y))=L(\b(y)\cdot\a(x))\p-L(\a\b(y))L(\a(x)),\label{rep2}\\
&R(\b(x))L(\b(y))\f-L(\a\b(y))R(x)\f=R(\b(x))R(\a(y))\p-R(\a(y)\cdot x)\f\p.\label{rep3}
\end{align}
\end{defi}
We denote a representation of a BiHom-left-symmetric algebra $(A,\cdot,\alpha,\b)$ by $(V,L,R,\f,\p)$. We define the linear operation $\cdot_\ltimes:\otimes^2(A\oplus V)\lon(A\oplus V)$ by
\begin{equation}
(x+u)\cdot_\ltimes (y+v):=x\cdot y+L(x)(v)+R(y)(u),\quad \forall x,y \in A,  u,v\in V.
\end{equation}\label{eq:12.8}
and the linear maps $\alpha+\f,\b+\p:A\oplus V\longrightarrow A\oplus V$ by
\begin{equation}
(\alpha+\beta)(x+u):=\alpha(x)+\f(u),\quad (\b+\p)(x+u):=\b(x)+\p(u),\quad  \forall x\in A,  u\in V.
\end{equation}

\begin{pro}\label{direct-product}
 With the above notations, $(A\oplus V,\cdot_\ltimes,\alpha+\f,\b+\p)$ is a BiHom-left-symmetric algebra if and only if $(V,L,R,\f,\p)$ is a representation of the  BiHom-left-symmetric algebra $(A,\cdot,\alpha,\b)$.  \\
It is denoted by $A\ltimes_{(L,R)}V$ and called  the  semidirect product of  $(A,\cdot,\alpha,\b)$ and $(V,\f,\p)$.
\end{pro}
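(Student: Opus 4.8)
The plan is to check the three defining identities of a BiHom-left-symmetric algebra for the tuple $(A\oplus V,\cdot_\ltimes,\a+\f,\b+\p)$ and to show that, term by term, each one is equivalent to the corresponding datum on $A$ together with the representation axioms \eqref{rep1}--\eqref{rep3}. Since $\cdot_\ltimes$ is bilinear and both $\a+\f$ and $\b+\p$ preserve the splitting $A\oplus V$, every expression we form decomposes into an $A$-component and a $V$-component; the $A$-component will always reproduce the identity already known to hold in $(A,\cdot,\a,\b)$, so all the content sits in the $V$-component. The same expansion proves both directions at once: for the ``if'' direction one substitutes a representation and reads off that the identities hold, while for the ``only if'' direction one specialises the general elements (setting various components to zero) so as to isolate each of \eqref{rep1}, \eqref{rep2}, \eqref{rep3} individually. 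Throughout I use regularity, i.e.\ invertibility of $\a,\b$ (and hence of $\f,\p$ on the representation), which is standing in the paper.

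First I would dispose of the easy axioms. Commutativity $(\a+\f)(\b+\p)=(\b+\p)(\a+\f)$ is immediate from $\a\b=\b\a$ on $A$ and the assumption that $\f,\p$ commute on $V$. For multiplicativity, I compute $(\a+\f)\big((x+u)\cdot_\ltimes(y+v)\big)$ and $(\a+\f)(x+u)\cdot_\ltimes(\a+\f)(y+v)$; using multiplicativity of $\a$ on $A$, their $V$-components are $\f L(x)v+\f R(y)u$ and $L(\a(x))\f(v)+R(\a(y))\f(u)$ respectively. Equating these and setting first $u=0$, then $v=0$, yields exactly $\f L(x)=L(\a(x))\f$ and $\f R(x)=R(\a(x))\f$. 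The identical computation with $\b+\p$ produces the two $\p$-relations, so multiplicativity of $\a+\f$ and $\b+\p$ is equivalent to the full set \eqref{rep1}.

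The heart of the proof is the BiHom-left-symmetric identity applied to $X=x+u$, $Y=y+v$, $Z=z+w$. I would expand the four products $(\b(X)\cdot_\ltimes\a(Y))\cdot_\ltimes\a\b(Z)$, $\a\b(X)\cdot_\ltimes(\a(Y)\cdot_\ltimes Z)$ and their $X\leftrightarrow Y$ swaps, tracking where each of $L$ and $R$ acts. The $A$-component is precisely identity \eqref{id-Bihom-pre} for $(A,\cdot,\a,\b)$ and so holds automatically. In the $V$-component I would sort the terms according to which argument the free vector occupies: the terms in which the vector sits in the third argument $Z$ are acted on only by left multiplications, and collecting their coefficients reproduces \eqref{rep2} (one $L$-only relation for each side of the symmetrised identity); the terms in which the vector sits in the first or second argument mix left and right multiplications, and collecting these reproduces \eqref{rep3}. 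By the $X\leftrightarrow Y$ symmetry built into the identity, the $u$-slot and $v$-slot contributions give the same relation, so one copy of \eqref{rep3} suffices.

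The main obstacle is the bookkeeping of the twisting maps $\f,\p,\f\p$: the two sides of the left-symmetric identity twist the three arguments asymmetrically (one by $\a\b$, one by $\a$, one untwisted), so the induced actions on $V$ appear with different composites of $\f$ and $\p$, and matching them to the single twist displayed in \eqref{rep2} and \eqref{rep3} requires repeated use of the intertwining relations \eqref{rep1}, together with invertibility of $\f,\p$, to normalise every term to a common twist before comparing coefficients. Once the twists are aligned, reading off the coefficients of $w$, and then of $u$ (equivalently $v$), delivers \eqref{rep2} and \eqref{rep3}, which completes both implications.
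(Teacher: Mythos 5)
Your proposal is correct and follows essentially the same route as the paper: expand the BiHom-left-symmetric identity on $A\oplus V$, observe that the $A$-component is the identity already holding in $(A,\cdot,\a,\b)$, and read \eqref{rep2} off the coefficient of the vector in the third slot and \eqref{rep3} off the coefficient in the first/second slots, with the multiplicativity of $\a+\f$ and $\b+\p$ accounting for \eqref{rep1}. Your explicit treatment of the ``only if'' direction (specialising components to zero to isolate each axiom) is a point the paper leaves implicit, but it is the same computation read in reverse.
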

\begin{proof}
Let $x,y,z \in A$ and $u,v,w, \in V$. Then
\begin{align*}
&((\b(x)+\p(u)) \cdot_\ltimes(\a(y)+\f(v)))\cdot_\ltimes(\b(z)+\p(w))- (\a\b(x)+\f\p(u))\cdot_\ltimes ((\a(y)+\f(v))\cdot_\ltimes(z+w))\\
=&(\b(x)\cdot \a(y)+L(\b(x))\f(v)+R(\a(y))\p(u))\cdot_\ltimes(\b(z)+\p(w))\\
-&(\a\b(x)+\f\p(u))\cdot_\ltimes (\a(y)\cdot z +L(\a(y))w+R(z)\f(v))\\
=&(\b(x)\cdot \a(y))\cdot \b(z)+L(\b(x)\cdot \a(y))\p(w)+R(\b(z))L(\b(x))\f(v)-R(\b(z))R(\a(y))\p(u)\\
-& \a\b(x)\cdot (\a(y)\cdot z)-L(\a\b(x))L(\a(y))w-L(\a\b(x))R(z)\f(v)-R(\a(y)\cdot z)\f\p(u).
\end{align*}
On the other hand,
\begin{align*}
&((\b(y)+\p(v)) \cdot_\ltimes(\a(x)+\f(u)))\cdot_\ltimes(\b(z)+\p(w))- (\a\b(y)+\f\p(v))\cdot_\ltimes ((\a(x)+\f(u))\cdot_\ltimes(z+w))\\
=&(\b(y)\cdot \a(x)+L(\b(y))\f(u)+R(\a(x))\p(v))\cdot_\ltimes(\b(z)+\p(w))\\
-&(\a\b(y)+\f\p(v))\cdot_\ltimes (\a(x)\cdot z +L(\a(x))w+R(z)\f(u))\\
=&(\b(y)\cdot \a(x))\cdot \b(z)+L(\b(y)\cdot \a(x))\p(w)+R(\b(z))L(\b(y))\f(u)-R(\b(z))R(\a(x))\p(v)\\
-& \a\b(y)\cdot (\a(x)\cdot z)-L(\a\b(y))L(\a(x))w-L(\a\b(y))R(z)\f(u)-R(\a(x)\cdot z)\f\p(v).
\end{align*}
Using the fact that $(A,\cdot,\a,\b)$ is a BiHom-left-symmetric algebra and identities \eqref{rep2}-\eqref{rep3}, we deduce that $(A\oplus V,\cdot_\ltimes,\alpha+\f,\b+\p)$ is a BiHom-left-symmetric algebra.
\end{proof}

\begin{ex}
Let $(A,\cdot,\a, \b)$ be a BiHom-left-symmetric algebra. Define $\ell,r:A\longrightarrow \gl(A)$ by
$$\ell_x(y)=x\cdot y,\quad r_x(y)=y\cdot x, \quad \forall x,y \in A.$$
Then $(A,\ell,r,\a,\b)$ is a representation of $(A,\cdot,\alpha,\b)$, which is called the  adjoint representation.
\end{ex}

\begin{ex}
Let $(A,\cdot,\alpha,\b)$ be a BiHom-left-symmetric algebra. Then $(\mathbb{R},0,0,1,1)$ is a representation of $(A,\cdot,\alpha,\b)$, which is called the trivial representation.

\end{ex}
\begin{pro}\label{important-rep}
Let $(V,L,R,\f,\p)$ be a representation of a BiHom-left-symmetric algebra $(A,\cdot,\alpha,\b)$. Then we have the following assertions
\begin{itemize}
  \item [(i)]  $(V,L,\f,\p)$ is a representation of the sub-adjacent BiHom-Lie algebra $A^C$.
  \item [(ii)] $(V,\r,\f,\p)$ is a  representation of the sub-adjacent BiHom-Lie algebra $A^C$,
   where for any $x \in A$,
\begin{align}\label{representation induite}
& \r(x)=L(x)-R(\a\b^{-1}(x))\circ \f^{-1}\p.
\end{align}
\end{itemize}
\end{pro}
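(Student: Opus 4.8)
The plan is to verify, in each case, the three axioms \eqref{bihom-lie-rep-1}--\eqref{bihom-lie-rep-3} defining a representation of the sub-adjacent BiHom-Lie algebra $A^C$, whose bracket is $[x,y]_C=x\cdot y-\a^{-1}\b(y)\cdot\a\b^{-1}(x)$. For (i) I take $\r=L$. The first two axioms $L(\a(x))\f=\f L(x)$ and $L(\b(x))\p=\p L(x)$ are literally the first two identities of \eqref{rep1}. For the cocycle axiom \eqref{bihom-lie-rep-3} I would first record that $[\b(x),y]_C=\b(x)\cdot y-\a^{-1}\b(y)\cdot\a(x)$ (using $\a\b^{-1}\b=\a$), so that by linearity of $L$ one needs $L(\b(x)\cdot y)\p-L(\a^{-1}\b(y)\cdot\a(x))\p=L(\a\b(x))L(y)-L(\b(y))L(\a(x))$. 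This is obtained by substituting $y\mapsto\a^{-1}(y)$ in \eqref{rep2} and using $\a\b=\b\a$ to rewrite $\b\a^{-1}=\a^{-1}\b$ and $\a\b\a^{-1}=\b$; hence (i) is immediate.

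For (ii) I would avoid a direct axiom check and instead use the two semidirect-product descriptions already at hand. By Proposition \ref{direct-product}, $A\ltimes_{(L,R)}V$ is a BiHom-left-symmetric algebra, so its sub-adjacent BiHom-Lie algebra $(A\ltimes_{(L,R)}V)^C$ is a BiHom-Lie algebra on $A\oplus V$ with twisting maps $\a+\f,\b+\p$ and bracket $[X,Y]_C=X\cdot_\ltimes Y-(\a+\f)^{-1}(\b+\p)(Y)\cdot_\ltimes(\a+\f)(\b+\p)^{-1}(X)$. Expanding with $(\a+\f)^{-1}(\b+\p)(y+v)=\a^{-1}\b(y)+\f^{-1}\p(v)$ and $(\a+\f)(\b+\p)^{-1}(x+u)=\a\b^{-1}(x)+\f\p^{-1}(u)$, the $A$-component of $[x+u,y+v]_C$ is $[x,y]_C$, while the $V$-component is $\big(L(x)-R(\a\b^{-1}(x))\f^{-1}\p\big)v+\big(R(y)-L(\a^{-1}\b(y))\f\p^{-1}\big)u$.

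With $\r(x)=L(x)-R(\a\b^{-1}(x))\f^{-1}\p$ as in \eqref{representation induite}, I would then identify this $V$-component with the semidirect-product expression of Lemma \ref{lem:semidirectp}. Since $\a\b^{-1}\a^{-1}\b=\idd$ gives $\r(\a^{-1}\b(y))=L(\a^{-1}\b(y))-R(y)\f^{-1}\p$, commutativity of $\f,\p$ yields $-\r(\a^{-1}\b(y))\f\p^{-1}u=R(y)u-L(\a^{-1}\b(y))\f\p^{-1}u$, matching the $u$-coefficient. Hence $[x+u,y+v]_C=[x,y]_C+\r(x)v-\r(\a^{-1}\b(y))\f\p^{-1}u$, i.e. $(A\ltimes_{(L,R)}V)^C=A^C\ltimes_\r V$. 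As the left-hand side is a BiHom-Lie algebra, the equivalence of Lemma \ref{lem:semidirectp} forces $(V,\r,\f,\p)$ to be a representation of $A^C$.

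The main obstacle, had I attempted (ii) directly, would be verifying \eqref{bihom-lie-rep-3} for $\r$, which forces one to combine \eqref{rep2} and \eqref{rep3} simultaneously while tracking all four twists $\a,\b,\f,\p$; routing through Proposition \ref{direct-product} is attractive precisely because that combination has already been absorbed there, so the only remaining care is the elementary bookkeeping with $\a\b=\b\a$, $\f\p=\p\f$, and the intertwining relations \eqref{rep1} for $L$ and $R$.
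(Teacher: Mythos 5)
Your proposal is correct and follows essentially the same route as the paper: part (i) is the direct check via \eqref{rep1} and \eqref{rep2} (you merely make explicit the substitution $y\mapsto\a^{-1}(y)$ that the paper leaves to the reader), and part (ii) is exactly the paper's argument of passing to the sub-adjacent BiHom-Lie algebra of $A\ltimes_{(L,R)}V$ and matching the resulting bracket with the semidirect-product form of Lemma \ref{lem:semidirectp}.
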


\begin{proof}
(i) Using Eq. \eqref{rep2} we can easily check that $L$ is a representation of $A^C$ on $V$ with respect to $\f,\ \p$.

(ii) By Proposition \ref{direct-product},  the semidirect product $A\ltimes_{(L,R)}V$ is a BiHom-left-symmetric algebra . Consider its subadjacent BiHom-Lie algebra structure $(A\oplus V,[\cdot,\cdot]_{C},\alpha+\f,\b+\beta)$, we have
\begin{align*}
&[x+u,y+v]_{C}=(x+u)\cdot_\ltimes(y+v)-(\a+\f)^{-1}(\b+\p)(y+v)\cdot_\ltimes (\a+\f)(\b+\p)^{-1}(x+u)\\
=&(x+u)\cdot_\ltimes(y+v)-(\a^{-1}\b(y)+\f^{-1}\p(v))\cdot_\ltimes(\a\b^{-1}(x)+\f\p^{-1}(u))\\
=&x\cdot y+L(x)v+R(y)u-\a^{-1}\b(y)\cdot \a\b^{-1}(x)-L(\a^{-1}\b(y))\f\p^{-1}(u)-R(\a\b^{-1}(x))\f^{-1}\p(v)\\
=&[x,y]_C+(L(x)-R(\a\b^{-1}(x))\f^{-1}\p)(v)-(L(\a^{-1}\b(y))\f\p^{-1}-R(y))(u).
\end{align*}
According to Lemma \ref{lem:semidirectp}, we deduce that $(V,\rho,\f,\p)$ is a representation of the BiHom-Lie algebra $A^C$.
\end{proof}

\

Let $(V,L,R)$ be a representation of a left-symmetric algebra $(A,\cdot)$.  Given four linear maps $\a,\b: A \to A$ and $\f,\p: V \to V$ satisfying
\begin{align*}
 \a\b= \b\a,&\  \f\p=\p\f,\\
 \f L(x)=L(\a(x))\f,&\ ~~~~~\ \ \  \p L(x)=L(\b(x))\p, \\
   \f R(x)=R(\a(x))\f,&\  \ \ \   \p R(x)=R(\b(x))\p.
\end{align*}
Then, define $\mathfrak{L}, \mathfrak{R}: A \to \gl(V)$ and $\cdot_{\a,\b}: A^{\otimes 2}\to A$ respectively by $\mathfrak{L}(x)=L(\a(x))\p$,
$\mathfrak{R}(x)=R(\b(x))\f$ and $x\cdot_{\a,\b}y=\a(x)\cdot \b(y)$.
We have the following conclusion
\begin{pro}
Under the above notations, $(V,\mathfrak{L},\mathfrak{R},\f,\p)$ is a representation of the BiHom-left-symmetric algebra $(A,\cdot_{\a,\b},\a,\b)$.
\end{pro}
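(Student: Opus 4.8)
The plan is to verify directly the three defining identities \eqref{rep1}, \eqref{rep2} and \eqref{rep3} of Definition \ref{defi:hom-pre representation} for the data $(\mathfrak{L},\mathfrak{R},\f,\p)$ relative to the twisted product $\cdot_{\a,\b}$. The tools I expect to need are exactly the six hypotheses listed before the statement, together with the multiplicativity of $\a$ and $\b$ for $\cdot$ (which is precisely what makes $(A,\cdot_{\a,\b},\a,\b)$ a BiHom-left-symmetric algebra) and the two defining identities of the representation $(V,L,R)$ of $(A,\cdot)$: that $L$ represents the sub-adjacent Lie algebra $\mathfrak{g}(A)$, i.e. $L(x\cdot y)-L(y\cdot x)=L(x)L(y)-L(y)L(x)$, and the mixed compatibility $L(x)R(y)-R(y)L(x)=R(x\cdot y)-R(y)R(x)$. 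The guiding principle is that each commutation relation such as $\f L(x)=L(\a(x))\f$ lets me slide an occurrence of $\f$ or $\p$ rightward past any $L$ or $R$ at the cost of twisting its argument by $\a$ or $\b$; after rearranging, a single fixed power of $\f$ and $\p$ will factor out of every term.

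First I would dispose of \eqref{rep1}, which is pure bookkeeping: for instance $\f\mathfrak{L}(x)=\f L(\a(x))\p=L(\a^2(x))\f\p=L(\a^2(x))\p\f=\mathfrak{L}(\a(x))\f$, where the second step uses $\f L(\a(x))=L(\a^2(x))\f$ and the third uses $\f\p=\p\f$; the three relations for $\p\mathfrak{L}$, $\f\mathfrak{R}$ and $\p\mathfrak{R}$ have an identical shape.

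For \eqref{rep2} I would expand $\mathfrak{L}(\b(x)\cdot_{\a,\b}\a(y))=L(\a(\a\b(x)\cdot\a\b(y)))\p=L(\a^2\b(x)\cdot\a^2\b(y))\p$, using $\b(x)\cdot_{\a,\b}\a(y)=\a\b(x)\cdot\a\b(y)$ and multiplicativity of $\a$, and I would rewrite $\mathfrak{L}(\a\b(x))\mathfrak{L}(\a(y))$ by sliding the inner $\p$ to the right. Both sides then acquire the common right factor $\p^2$, and what survives is precisely $L(X\cdot Y)-L(X)L(Y)=L(Y\cdot X)-L(Y)L(X)$ with $X=\a^2\b(x)$ and $Y=\a^2\b(y)$, i.e. the statement that $L$ represents $\mathfrak{g}(A)$ read at twisted arguments. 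For \eqref{rep3} the same mechanism applies: after expanding every term, using multiplicativity of $\b$ to simplify $\mathfrak{R}(\a(y)\cdot_{\a,\b}x)=R(\a^2\b(y)\cdot\b^2(x))\f$, and commuting the twisting maps, each of the four terms carries the trailing operator $\f^2\p$, and the identity collapses to $R(P)L(Q)-L(Q)R(P)=R(P)R(Q)-R(Q\cdot P)$ with $P=\b^2(x)$ and $Q=\a^2\b(y)$, which is just the mixed compatibility of $(V,L,R)$ written at twisted arguments.

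The step demanding the most care is the bookkeeping in \eqref{rep3}: for each of the four terms one must track which twist ($\a$ or $\b$, and with what exponent) attaches to each argument and exactly how many copies of $\f$ and $\p$ survive on the right after all the sliding. The computation closes only because these counts agree across the four terms, so that the common factor $\f^2\p$ may be cancelled; the inputs that force this agreement are the morphism properties of $\a$ and $\b$ and the commutativity relations $\a\b=\b\a$ and $\f\p=\p\f$. Once the common factors are removed, both \eqref{rep2} and \eqref{rep3} are nothing more than the untwisted representation axioms of $(V,L,R)$ evaluated at twisted arguments, so no genuinely new identity is ever needed.
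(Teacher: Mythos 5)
Your proposal is correct and follows essentially the same route as the paper: expand $\mathfrak{L}$ and $\mathfrak{R}$ in terms of $L$, $R$ and the twisting maps, slide $\f$ and $\p$ to the right using the intertwining relations, and reduce \eqref{rep2} and \eqref{rep3} to the untwisted representation axioms of $(V,L,R)$ evaluated at $\a^2\b(x)$, $\a^2\b(y)$, $\b^2(x)$ (the paper carries out \eqref{rep2} explicitly, with the same common factor $\p^2$, and dismisses \eqref{rep3} as "similar," which you instead spell out, correctly obtaining the common factor $\f^2\p$). No substantive difference.
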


\begin{proof}
Let $x,y \in A$ and $v \in V$. It is easy to check that
\begin{align*}
 \f \mathfrak{L}(x)=\mathfrak{L}(\a(x))\f,&\  \ \ \   \p \mathfrak{L}(x)=\mathfrak{L}(\b(x))\p, \\
   \f \mathfrak{R}(x)=\mathfrak{R}(\a(x))\f,&\  \ \ \   \p \mathcal{R}(x)=\mathfrak{R}(\b(x))\p.
\end{align*}
To prove \eqref{rep2}, we compute as follows
\begin{align*}
&\mathfrak{L}(\b(x) \cdot_{\a,\b}\a(y))\p(v)-\mathfrak{L}(\a\b(x))\mathfrak{L}(\a(y))v\\
=& \mathfrak{L}(\a\b(x)\cdot \a\b(y))\p(v)-\mathfrak{L}(\a\b(x))L(\a^2(y))\p(v)\\
=& L(\a^2\b(x)\cdot \a^2\b(y))\p^2(v)-L(\a^2\b(x))L(\a^2\b(y))\p^2(v)\\
=&L(\a^2\b(y)\cdot \a^2\b(x))\p^2(v)-L(\a^2\b(y))L(\a^2\b(x))\p^2(v)\\
=&\mathfrak{L}(\b(y) \cdot_{\a,\b}\a(x))\p(v)-\mathfrak{L}(\a\b(y))\mathfrak{L}(\a(x))v.
\end{align*}
Similarly, one can get identity \eqref{rep3}.
\end{proof}

\begin{thm} \label{lie to left-symmetric}
Let $T:V \to \g$ be an $\mathcal{O}$-operator on a BiHom-Lie algebra $(\g,[\cdot,\cdot],\a,\b)$ associated to a representation $(V,\rho,\f,\p)$.  Then there exists a BiHom-left-symmetric algebra structure on $V$ defined by
\begin{equation}\label{lie==>left-symmetric}
    u \ast v= \rho(T(u))v,\ \forall u,v \in V.
\end{equation}
Therefore $V$ is a BiHom-Lie algebra as the sub-adjacent
BiHom-Lie algebra of this BiHom-left-symmetric algebra and $T$ is a homomorphism of BiHom-Lie algebras. Furthermore,
$T(V)=\{T(v)| v \in V\}\subseteq \g$ is a BiHom-Lie subalgebra of $\g$ and there is an induced BiHom-left-symmetric algebra structure on $T(V )$ given by
\begin{equation}
    T(u) \circ T(v)=T(u \ast v)=T(\rho(T(u))v),\ \forall u,v \in V.
\end{equation}
Moreover, $T$ is a homomorphism of BiHom-left-symmetric algebras.
\end{thm}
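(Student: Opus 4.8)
The plan is to transport the structure of $\g$ onto $V$ through the operator $T$, using the defining identity \eqref{O-operator} as the bridge that converts "$T$ of a $\ast$-product" into a bracket in $\g$. First I would fix the two structure maps on $V$ to be $\f$ and $\p$ (the maps carried by the representation), and record the intertwining relations $T\circ\f=\a\circ T$ and $T\circ\p=\b\circ T$ that an $\mathcal{O}$-operator satisfies, together with $\f\p=\p\f$. Multiplicativity of $\f,\p$ for $\ast$ is then immediate: from \eqref{lie==>left-symmetric}, $\f(u\ast v)=\f\rho(T(u))v$, and pushing $\f$ through $\rho$ by \eqref{bihom-lie-rep-1} together with $\a T=T\f$ gives $\rho(T(\f(u)))\f(v)=\f(u)\ast\f(v)$; the same computation with \eqref{bihom-lie-rep-2} handles $\p$.

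The heart of the proof is the BiHom-left-symmetry condition in \eqref{id-Bihom-pre} for $\ast$. I would expand the associator $(\p(u)\ast\f(v))\ast\f\p(w)-\f\p(u)\ast(\f(v)\ast w)$ by repeatedly substituting \eqref{lie==>left-symmetric}. The decisive step is to compute $T(\p(u)\ast\f(v))=T(\rho(T\p(u))\f(v))$: applying \eqref{O-operator} with arguments $\p(u),\f(v)$ and simplifying the twists via $\f\p=\p\f$ rewrites this as $[\b T(u),\a T(v)]+T(\p(v)\ast\f(u))$. Feeding this back in, the ``swapped'' term $T(\p(v)\ast\f(u))$ is exactly what also appears when one expands the associator with $u$ and $v$ interchanged, so upon forming the difference of the two associators those terms cancel and one is left with a single bracket contribution $\rho([\b T(u),\a T(v)])$ set against two remaining $\rho\rho$ terms. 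The BiHom-Lie representation axiom \eqref{bihom-lie-rep-3}, applied with $x=T(u)$ and $y=\a T(v)$, is what converts $\rho([\b T(u),\a T(v)])$ into $\rho(\a\b T(u))\rho(\a T(v))-\rho(\a\b T(v))\rho(\a T(u))$, which should match the surviving terms after using the BiHom skew-symmetry of $[\cdot,\cdot]$. I expect this twist-tracking to be the main obstacle: the positions of $\a,\b,\f,\p$ must be reconciled using \eqref{bihom-lie-rep-1} and \eqref{bihom-lie-rep-2}, and the delicate point is precisely to bring the argument $\f\p(w)$ carried by the outer product into agreement with the $\p$-twist produced by \eqref{bihom-lie-rep-3}.

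For the remaining assertions I would argue structurally. Once $(V,\ast,\f,\p)$ is BiHom-left-symmetric, its sub-adjacent BiHom-Lie algebra exists automatically, with bracket $[u,v]_C=u\ast v-\f^{-1}\p(v)\ast\f\p^{-1}(u)$; substituting \eqref{lie==>left-symmetric} gives $T[u,v]_C=T(\rho(T(u))v-\rho(T(\f^{-1}\p(v)))\f\p^{-1}(u))=[T(u),T(v)]$ by \eqref{O-operator}, so $T$ is a morphism of BiHom-Lie algebras. Since every bracket $[T(u),T(v)]$ lands in $T(V)$ by \eqref{O-operator}, and $\a(T(u))=T(\f(u))$, $\b(T(u))=T(\p(u))$ keep $T(V)$ stable, $T(V)$ is a BiHom-Lie subalgebra. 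Finally, to define $T(u)\circ T(v):=T(u\ast v)$ I would check that $\Ker T$ is a two-sided ideal for $\ast$: if $T(u)=0$ then $u\ast v=\rho(0)v=0$, while if $T(v)=0$ then $T(\f^{-1}\p(v))=\a^{-1}\b T(v)=0$, and \eqref{O-operator} yields $0=[T(u),0]=T(\rho(T(u))v)=T(u\ast v)$. Hence $\circ$ descends to $T(V)\cong V/\Ker T$, is well-defined, inherits the BiHom-left-symmetric axioms from $(V,\ast,\f,\p)$ through the surjection $T$, and makes $T$ a homomorphism of BiHom-left-symmetric algebras by construction.
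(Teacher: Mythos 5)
Your strategy coincides with the paper's: expand the associator, use the $\mathcal{O}$-operator identity \eqref{O-operator} to rewrite $T(\p(u)\ast\f(v))$ as $[T\p(u),T\f(v)]+T(\p(v)\ast\f(u))$, convert $\rho([\b T(u),\a T(v)])$ into a difference of $\rho\rho$-terms via \eqref{bihom-lie-rep-3}, and conclude that the difference of associators is symmetric in $u$ and $v$; the morphism statements are then the identity \eqref{O-operator} read backwards. One concrete warning about the step you yourself single out as delicate: if the third argument carries the twist $\f\p$, as the displayed axiom in the paper's definition of a BiHom-left-symmetric algebra literally demands, the computation does \emph{not} close. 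After applying \eqref{bihom-lie-rep-3} with $x=T(u)$, $y=\a T(v)$ you would be left comparing $\rho(\a\b T(u))\rho(\a T(v))\f(w)$ against the inner-associator term $\rho(\a\b T(u))\rho(\a T(v))w$, and the stray $\f$ on $w$ never cancels. The resolution is that the third slot should be twisted by $\b$ (respectively $\p$) alone, i.e.\ the identity to verify is $(\p(u)\ast\f(v))\ast\p(w)-\f\p(u)\ast(\f(v)\ast w)=(\p(v)\ast\f(u))\ast\p(w)-\f\p(v)\ast(\f(u)\ast w)$; this is the form the paper actually proves here and also uses in the proof of Proposition \ref{direct-product}, and with it \eqref{bihom-lie-rep-3} produces exactly $\rho(\a\b T(u))\rho(\a T(v))w-\rho(\a\b T(v))\rho(\a T(u))w$, which are precisely the two inner terms. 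So your plan is sound once this twist is corrected. Finally, your check that $\Ker T$ is an ideal for $\ast$ (so that $\circ$ is well defined on $T(V)$) is a genuine addition: the paper disposes of this with ``the other conclusions follow immediately,'' whereas your argument via $T(\f^{-1}\p(v))=\a^{-1}\b T(v)$ actually supplies the missing justification.
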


\begin{proof}
Let $u, v, w \in V$. Then we have
\begin{align*}
&(\p(u)\ast \f(v))\ast \p(w)-\f \p(u)\ast (\f(v) \ast w) \\
=& (\rho(T(\p(u))) \f(v)) \ast \p(w)-\f\p(u) \ast (\rho(T(\f(v))) w) \\
=& \rho ( T(\rho(T(\p(u))) \f(v)) ) \p(w)-\rho( T(\f\p(u) ) )\rho(T(\f(v))) w \\
=& \rho[T\p(u),T\f(v)]\p(w)+\rho(T(\rho(T(\p(v)))\f(u))\p(w) \\
-&\rho[T\p(u),T\f(v)]\p(w)-\rho(T\f\p(v))\rho(T\f(u))w  \\
=& \rho(T(\rho(T(\p(v)))\f(u))\p(w)-\rho(T\f\p(v))\rho(T\f(u))w \\
=& (\p(v)\ast \f(u))\ast \p(w)-\f \p(v)\ast (\f(u) \ast w) .
\end{align*}
Hence $(V,\ast,\f,\p)$ is a BiHom-left-symmetric algebra.
On the other hand,
\begin{align*}
 T([u,v]_{V^C})=& T(u \ast v-\f^{-1}\p(v)\ast \f\p^{-1}(u) ) \\
     =& T(\rho(T(u))v-\rho(T(\f^{-1}\p(v)))\f\p^{-1}(u)) \\
     =& [T(u),T(v)],
\end{align*}
which implies that $T$ is a homomorphism of BiHom-Lie algebras. The other conclusions follow immediately.
\end{proof}

A direct consequence of Theorem \ref{lie to left-symmetric} is the following construction of a
BiHom-left-symmetric algebra from a Rota-Baxter operator (of weight zero) of a BiHom-Lie
algebra.

\begin{cor}\cite{Liu&Makhlouf&Menini&Panaite}
Let $(\g,[\cdot,\cdot],\a,\b)$ be a BiHom-Lie algebra and $R: \g \rightarrow \g$ be a Rota-Baxter operator of weight zero. Then there is a BiHom-left-symmetric algebra structure on $\g$ given by
\begin{eqnarray}
 x \ast y=[R(x),y]\ \forall x,y\in \g.
\end{eqnarray}
\end{cor}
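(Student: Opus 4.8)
The plan is to realize this Corollary as a direct specialization of Theorem \ref{lie to left-symmetric}, by recognizing that a Rota-Baxter operator of weight zero is exactly an $\mathcal{O}$-operator associated to the adjoint representation. First I would take the adjoint representation $(\g,\ad,\a,\b)$ of the BiHom-Lie algebra $(\g,[\cdot,\cdot],\a,\b)$ on itself, recorded in the Preliminaries (so $V=\g$, $\rho=\ad$, $\f=\a$, $\p=\b$). The goal is to show that $T:=R$ satisfies the $\mathcal{O}$-operator equation \eqref{O-operator} for this representation. Writing out \eqref{O-operator} with $\rho=\ad$, $\f=\a$, $\p=\b$ gives
\[
[R(u),R(v)]=R\big([R(u),v]-[R(\a^{-1}\b(v)),\a\b^{-1}(u)]\big).
\]
The first term inside $R$ already matches $[R(u),v]$, so the task reduces to checking that $-[R(\a^{-1}\b(v)),\a\b^{-1}(u)]=[u,R(v)]$; once this holds the right-hand side becomes $R([R(u),v]+[u,R(v)])$, which is precisely the Rota-Baxter identity of weight zero.

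The key step, and essentially the only one requiring care, is the identity $-[R(\a^{-1}\b(v)),\a\b^{-1}(u)]=[u,R(v)]$. I would establish it from the BiHom skew-symmetry $[\b(x),\a(y)]=-[\b(y),\a(x)]$ together with the hypotheses that $R$ commutes with $\a$ and $\b$ and that $\a\b=\b\a$. Since $R\a=\a R$ and $R\b=\b R$ (hence $R$ commutes with $\a^{-1}$ as well), we have $R(\a^{-1}\b(v))=\a^{-1}\b R(v)=\b(\a^{-1}R(v))$ and $\a\b^{-1}(u)=\a(\b^{-1}(u))$, so the left-hand side equals $-[\b(\a^{-1}R(v)),\a(\b^{-1}(u))]$. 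Applying the skew-symmetry with $x=\a^{-1}R(v)$ and $y=\b^{-1}(u)$ turns this into $[\b(\b^{-1}(u)),\a(\a^{-1}R(v))]=[u,R(v)]$, as desired.

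With this identity in hand, $R$ is an $\mathcal{O}$-operator associated to the adjoint representation, so Theorem \ref{lie to left-symmetric} applies directly and endows $\g$ with a BiHom-left-symmetric algebra structure given by $u\ast v=\rho(R(u))v=\ad(R(u))v=[R(u),v]$, which is exactly the claimed formula. I expect no further obstacle: the compatibility of $\ast$ with $\a,\b$ and the left-symmetric identity are all guaranteed by the Theorem, so nothing beyond the skew-symmetry bookkeeping of the previous paragraph needs to be verified.
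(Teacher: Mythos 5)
Your proposal is correct and follows exactly the route the paper intends: the corollary is stated as a direct consequence of Theorem \ref{lie to left-symmetric}, obtained by viewing the Rota-Baxter operator as an $\mathcal{O}$-operator for the adjoint representation $(\g,\ad,\a,\b)$. Your computation showing $-[R(\a^{-1}\b(v)),\a\b^{-1}(u)]=[u,R(v)]$ via BiHom skew-symmetry and the commutation of $R$ with $\a,\b$ is precisely the verification the paper leaves implicit, and it is carried out correctly.
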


\begin{pro}
Let $(\g,[\cdot,\cdot],\a,\b)$ be a BiHom-Lie algebra. Then there exists a compatible BiHom-left-symmetric algebra structure on $\g$ if and only if there exists an invertible $\mathcal{O}$-operator of $(\g,[\cdot,\cdot],\a,\b)$.
\end{pro}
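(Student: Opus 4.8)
The plan is to prove the two implications separately, using Theorem~\ref{lie to left-symmetric} for the direction that manufactures a product from an operator, and Proposition~\ref{important-rep} together with the adjoint representation for the converse. Throughout I use that an $\mathcal{O}$-operator $T$ intertwines the structure maps, i.e. $T\circ\f=\a\circ T$ and $T\circ\p=\b\circ T$ (this is implicit in the definition and is exactly what makes $T$ a homomorphism of BiHom-Lie algebras in Theorem~\ref{lie to left-symmetric}), so that for invertible $T$ one has $T^{-1}\a=\f T^{-1}$ and $T^{-1}\b=\p T^{-1}$.

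For the direction ``invertible $\mathcal{O}$-operator $\Rightarrow$ compatible structure'', suppose $T:V\to\g$ is an invertible $\mathcal{O}$-operator associated to $(V,\rho,\f,\p)$. By Theorem~\ref{lie to left-symmetric}, the product $u\ast v=\rho(T(u))v$ makes $(V,\ast,\f,\p)$ a BiHom-left-symmetric algebra. First I would transport this product along the linear isomorphism $T$ by setting
\[
x\cdot y:=T\big(T^{-1}(x)\ast T^{-1}(y)\big)=T\big(\rho(x)T^{-1}(y)\big),\qquad x,y\in\g.
\]
That $(\g,\cdot,\a,\b)$ is again BiHom-left-symmetric follows once one checks multiplicativity $\a(x\cdot y)=\a(x)\cdot\a(y)$ and $\b(x\cdot y)=\b(x)\cdot\b(y)$, which come from the intertwining relations for $T$ and the representation axioms \eqref{bihom-lie-rep-1}--\eqref{bihom-lie-rep-2}, while the defining left-symmetry identity is carried over verbatim by the isomorphism $T$. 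The substantive step is compatibility: I would compute the sub-adjacent bracket $[x,y]_C=x\cdot y-\a^{-1}\b(y)\cdot\a\b^{-1}(x)$. Writing $x=T(u)$, $y=T(v)$ and pushing $\a^{-1}\b$ and $\a\b^{-1}$ through $T$ via the intertwining relations turns the right-hand side into
\[
T\big(\rho(T(u))v-\rho(T(\f^{-1}\p(v)))\f\p^{-1}(u)\big),
\]
which by the $\mathcal{O}$-operator equation \eqref{O-operator} equals $[T(u),T(v)]=[x,y]$. Hence $[\cdot,\cdot]_C=[\cdot,\cdot]$, and the transported product is compatible.

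For the converse, suppose $(\g,\cdot,\a,\b)$ is a BiHom-left-symmetric algebra whose sub-adjacent BiHom-Lie algebra $A^C$ is the given $(\g,[\cdot,\cdot],\a,\b)$. I would take the left multiplication $\ell$ of the adjoint representation; by Proposition~\ref{important-rep}(i), $(\g,\ell,\a,\b)$ is a representation of $A^C=\g$. I then claim the identity map $\idd:\g\to\g$ is an (obviously invertible) $\mathcal{O}$-operator associated to $\ell$. Indeed, for $T=\idd$, $\rho=\ell$, $\f=\a$, $\p=\b$, equation \eqref{O-operator} reads
\[
[u,v]=\ell(u)v-\ell(\a^{-1}\b(v))\,\a\b^{-1}(u)=u\cdot v-\a^{-1}\b(v)\cdot\a\b^{-1}(u)=[u,v]_C,
\]
which holds precisely because the structure is compatible. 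This produces the required invertible $\mathcal{O}$-operator.

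The main obstacle is the bookkeeping in the first implication: correctly moving the twists $\a^{-1}\b$ and $\a\b^{-1}$ across the map $T$ (which is invertible but not literally a morphism of the two products), and confirming that the transported product genuinely satisfies the BiHom-left-symmetric axioms involving $\a,\b$ and not merely the untwisted left-symmetry relation. Everything else is a direct unwinding of definitions, the key conceptual point being simply that compatibility is the statement that the $\mathcal{O}$-operator equation for $\idd$ reproduces the commutator bracket.
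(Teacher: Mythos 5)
Your proof is correct and follows essentially the same route as the paper: the forward direction transports the product of Theorem~\ref{lie to left-symmetric} along the invertible $T$ to obtain $x\cdot y=T(\rho(x)T^{-1}(y))$, and the converse exhibits the identity map as an invertible $\mathcal{O}$-operator. The one point where you are more careful than the paper is the choice of representation in the converse: the paper asserts that $\idd$ is an $\mathcal{O}$-operator associated to $(\g,\ad,\a,\b)$, whereas your choice of the left multiplication $\ell$ is the correct one --- with $\rho=\ad$ the right-hand side of \eqref{O-operator} becomes $[u,v]-[\a^{-1}\b(v),\a\b^{-1}(u)]=2[u,v]$ by the BiHom-skew-symmetry, so the genuine adjoint representation of the BiHom-Lie algebra does not satisfy the equation, and the paper's $\ad$ must be read as left multiplication in the left-symmetric product, exactly as you wrote it.
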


\begin{proof}
If there exists an invertible $\mathcal{O}$-operator of $(\g,[\cdot,\cdot],\a,\b)$ associated to a representation $(V,\rho,\f,\p)$, then by Theorem \ref{lie to left-symmetric}, there is a BiHom-left-symmetric algebra structure on $\g$ given by
$$x \circ y= T(\rho(x)T^{-1}(y)),\ \forall x,y \in \g.$$
Conversely, let $(\g,\cdot,\a,\b)$ be a BiHom-left-symmetric  algebra and $(\g,[\cdot,\cdot],\a,\b)$ be the associated BiHom-Lie algebra. Then the identity map $id: \g \to \g$ is an $\mathcal{O}$-operator of $(\g,[\cdot,\cdot],\a,\b)$
associated to $(\g,\ad,\a,\b)$.
\end{proof}


At the end of this section, we show that the tensor product of two representations of a BiHom-left-symmetric algebra is still a representation.

\begin{pro}

Let $(A,\cdot,\a,\b)$ be a BiHom-left-symmetric algebra, $(V,L_V,R_V,\f_V,\p_V)$ and $(W,L_W,R_W,\f_W,\p_W)$ two representations of $A$. Then $(V\otimes W,L_V\otimes \p_W+\p_V\otimes (L_W-R_W(\a\b^{-1}(\cdot))\f_W^{-1}\p_W),R_V\otimes \f_W,\f_V\otimes \f_W,\p_V\otimes \p_W)$ is a representation of $(A,\cdot,\a,\b)$.
\end{pro}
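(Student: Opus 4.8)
The plan is to verify the three defining identities \eqref{rep1}, \eqref{rep2} and \eqref{rep3} of Definition \ref{defi:hom-pre representation} for the proposed data on $V\otimes W$. Writing $\rho_W(x)=L_W(x)-R_W(\a\b^{-1}(x))\f_W^{-1}\p_W$, the proposed left action is $L(x)=L_V(x)\otimes\p_W+\p_V\otimes\rho_W(x)$, the right action is $R(x)=R_V(x)\otimes\f_W$, and the twists are $\f=\f_V\otimes\f_W$, $\p=\p_V\otimes\p_W$. The key preliminary observation, which drives the whole computation, is that by Proposition \ref{important-rep} both $(V,L_V,\f_V,\p_V)$ and $(W,\rho_W,\f_W,\p_W)$ are representations of the sub-adjacent BiHom-Lie algebra $A^C$; in particular $L_V$ satisfies \eqref{rep1}--\eqref{rep3} while $\rho_W$ satisfies \eqref{bihom-lie-rep-1}--\eqref{bihom-lie-rep-3}. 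It is $\rho_W$, rather than the left-symmetric structure of $W$, that will control the ``$W$-diagonal'' contributions.

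First I would dispatch \eqref{rep1}. Expanding $\f L(x)$ against $L(\a(x))\f$, together with the analogous pairs for $\p$ and for $R$, each term reduces to the equivariance conditions \eqref{rep1} for $V$, the conditions \eqref{bihom-lie-rep-1}--\eqref{bihom-lie-rep-2} for $\rho_W$, and the commutativity of $\f_V,\p_V$ and of $\f_W,\p_W$. This is routine bookkeeping.

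For \eqref{rep2} I would substitute $L(x)=L_V(x)\otimes\p_W+\p_V\otimes\rho_W(x)$ and expand both sides, sorting the resulting monomials by tensor type. The terms of type $(-)\otimes\p_W^2$ collapse to exactly identity \eqref{rep2} for $V$ and so cancel against the $x\leftrightarrow y$ side. The terms of type $\p_V^2\otimes(-)$ reduce to the requirement $\rho_W(\b(x)\cdot\a(y))\p_W-\rho_W(\b(y)\cdot\a(x))\p_W=\rho_W(\a\b(x))\rho_W(\a(y))-\rho_W(\a\b(y))\rho_W(\a(x))$. This is the conceptual crux: using $\a\b=\b\a$ one checks $\b(x)\cdot\a(y)-\b(y)\cdot\a(x)=[\b(x),\a(y)]_C$, so the displayed equality is precisely the BiHom-Lie representation axiom \eqref{bihom-lie-rep-3} for $\rho_W$ applied to the pair $(x,\a(y))$ (noting $\rho_W(\b\a(y))=\rho_W(\a\b(y))$). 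It is here that the left-symmetric product occurring inside $L$ must be rewritten as the sub-adjacent bracket so that the representation property of $\rho_W$ on $A^C$ can be invoked. Finally, the four mixed terms of type $L_V(\cdots)\p_V\otimes\rho_W(\cdots)\p_W$ cancel in pairs once $\p_V$ is moved past $L_V$ and $\p_W$ past $\rho_W$ via \eqref{rep1} and \eqref{bihom-lie-rep-2}.

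For \eqref{rep3} I would substitute $R(x)=R_V(x)\otimes\f_W$ together with the same $L$, and expand the four terms of the identity. The subterms built from $R_V$ and the $L_V$-part of $L$ all carry the common $W$-factor $\f_W^2\p_W$ and assemble into identity \eqref{rep3} for $V$ tensored with $\f_W^2\p_W$, hence vanish. The two remaining subterms, produced by the $\p_V\otimes\rho_W$-part of $L$, are $R_V(\b(x))\p_V\f_V\otimes\f_W\rho_W(\b(y))\f_W$ and $\p_V R_V(x)\f_V\otimes\rho_W(\a\b(y))\f_W^2$; using \eqref{rep1} for $R_V$ (so $\p_V R_V(x)=R_V(\b(x))\p_V$) to match the $V$-factors and \eqref{bihom-lie-rep-1} for $\rho_W$ (so $\rho_W(\a\b(y))\f_W=\f_W\rho_W(\b(y))$) to match the $W$-factors, the two coincide and cancel. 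Assembling the three verifications completes the proof. I expect the only non-mechanical point to be the reduction of the $\p_V^2\otimes(-)$ part of \eqref{rep2} to the Jacobi-type axiom \eqref{bihom-lie-rep-3} for $\rho_W$; everything else is careful tracking of the twisting maps.
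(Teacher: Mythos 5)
Your proof is correct, and it takes a cleaner route than the paper's. The paper verifies \eqref{rep2} and \eqref{rep3} for the tensor product by brute force: it expands $L_W-R_W(\a\b^{-1}(\cdot))\f_W^{-1}\p_W$ back into its $L_W$- and $R_W$-constituents and writes out a page-long sum of tensor monomials which is then asserted to vanish. You instead keep $\rho_W=L_W-R_W(\a\b^{-1}(\cdot))\f_W^{-1}\p_W$ packaged as a single operator and invoke Proposition \ref{important-rep}(ii) to treat $(W,\rho_W,\f_W,\p_W)$ as a representation of the sub-adjacent BiHom-Lie algebra $A^C$. This turns the hardest block of the computation --- the $\p_V^2\otimes(-)$ part of \eqref{rep2} --- into a one-line application of \eqref{bihom-lie-rep-3}, after the correct observation that $\b(x)\cdot\a(y)-\b(y)\cdot\a(x)=[\b(x),\a(y)]_C$ and that $\rho_W(\b\a(y))=\rho_W(\a\b(y))$; I checked that the mixed terms in \eqref{rep2} do become symmetric in $x\leftrightarrow y$ after commuting the twists, and that in \eqref{rep3} the two $\rho_W$-terms coincide via $\p_V R_V(x)=R_V(\b(x))\p_V$ and $\rho_W(\a\b(y))\f_W^2=\f_W\rho_W(\b(y))\f_W$, so all cancellations you claim go through. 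What your approach buys is transparency and a conceptual explanation of \emph{why} the formula works (the left factor carries the left-symmetric action, the right factor the sub-adjacent Lie action); what the paper's buys is self-containedness, since it never has to cite Proposition \ref{important-rep}. One small wording slip: it is the pair $(L_V,R_V)$, as a representation of the BiHom-left-symmetric algebra, that satisfies \eqref{rep1}--\eqref{rep3} --- not $L_V$ alone, and not by virtue of $L_V$ being a representation of $A^C$; this does not affect any step of your argument, which only ever uses the hypotheses actually available.
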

\begin{proof}
Since $(V,L_V,R_V,\f_V,\p_V)$ and $(W,L_W,R_W,\f_W,\p_W)$ are representations of a BiHom-left-symmetric algebra $(A,\cdot,\alpha,\b)$, for all $x\in A$, we have
\begin{eqnarray*}&&\Big(L_V\otimes \p_W+\p_V\otimes (L_W-R_W(\a\b^{-1}(\cdot))\f_W^{-1}\p_W)\Big)(\alpha(x))\circ (\f_V\otimes \f_W)\\
&&-(\f_V\otimes \f_W)\circ \Big(L_V\otimes \p_W+\p_V\otimes (L_W-R_W(\a\b^{-1}(\cdot))\f_W^{-1}\p_W)\Big)(x)\\
&=&\big(L_V(\alpha(x))\circ \p_V \big)\otimes(\p_W\circ \f_W)+(\p_V\circ \f_V)\otimes \big(L_W(\alpha(x))\circ \f_W\big)\\
&&-(\p_V\circ \f_V)\otimes \big(P_W(\alpha^2\beta^{-1}(x))\circ \p_W\big)-\big(\f_V\circ L_V(x)\big)\otimes(\f_W\circ \p_W)\\
&&-(\f_V\circ \p_V)\otimes \big(\f_W \circ L_W(x)\big)+(\f_V\circ \p_V)\otimes \big(\f_W\circ R_W(\alpha\beta^{-1}(x))\circ\f^{-1} \p_W\big)=0,
\end{eqnarray*}
which implies that
\begin{align}\label{tensor-rep-1}
&\Big(L_V\otimes \p_W+\p_V\otimes (L_W-R_W(\a\b^{-1}(\cdot))\f_W^{-1}\p_W)\Big)(\alpha(x))\circ (\f_V\otimes \f_W)\\ \nonumber=&(\f_V\otimes \f_W)\circ \Big(L_V\otimes \p_W+\p_V\otimes (L_W-R_W(\a\b^{-1}(\cdot))\f_W^{-1}\p_W)\Big)(x),
\end{align}
and
\begin{eqnarray*}&&\Big(R_V\otimes\f_W\Big)(\alpha(x))\circ (\f_V\otimes \f_W)-(\f_V\otimes \f_W)\circ \Big(R_V\otimes\f_W\Big)(x)\\
&=&(R_V(\a(x))\circ\f_V\otimes\f_W^2)-(\f_V\circ R_V(x)\otimes \f_W^2)=0,
\end{eqnarray*}
which implies that
\begin{eqnarray}\label{tensor-rep-2}\Big(R_V\otimes\f_W\Big)(\alpha(x))\circ (\f_V\otimes \f_W)=(\f_V\otimes \f_W)\circ \Big(R_V\otimes\f_W\Big)(x).
\end{eqnarray}
Similarly, we have
\begin{align}\label{tensor-rep-3}
&\Big(L_V\otimes \p_W+\p_V\otimes (L_W-R_W(\a\b^{-1}(\cdot))\f_W^{-1}\p_W)\Big)(\b(x))\circ (\p_V\otimes \p_W)\\ \nonumber=&(\p_V\otimes \p_W)\circ \Big(L_V\otimes \p_W+\p_V\otimes (L_W-R_W(\a\b^{-1}(\cdot))\f_W^{-1}\p_W)\Big)(x),
\end{align}
and
\begin{eqnarray}\label{tensor-rep-4}
\Big(R_V\otimes\f_W\Big)(\b(x))\circ (\p_V\otimes \p_W)=(\p_V\otimes \p_W)\circ \Big(R_V\otimes\f_W\Big)(x).
\end{eqnarray}
For all $x,y\in A$, by straightforward computations, we have
{\small\begin{align*}
&\Big(L_V\otimes \p_W+\p_V\otimes (L_W-R_W(\a\b^{-1}(\cdot))\f_W^{-1}\p_W)\Big)([\b(x),\a(y)])\circ (\p_V\otimes \p_W)\\
-&\Big(L_V\otimes \p_W+\p_V\otimes (L_W-R_W(\a\b^{-1}(\cdot))\f_W^{-1}\p_W)\Big)(\alpha\b(x))\circ \Big(L_V\otimes \p_W+\p_V\otimes (L_W-R_W(\a\b^{-1}(\cdot))\f_W^{-1}\p_W)\Big)(\a(y))\\
+&\Big(L_V\otimes \p_W+\p_V\otimes (L_W-R_W(\a\b^{-1}(\cdot))\f_W^{-1}\p_W)\Big)(\alpha\b(y))\circ \Big(L_V\otimes \p_W+\p_V\otimes (L_W-R_W(\a\b^{-1}(\cdot))\f_W^{-1}\p_W)\Big)(\a(x))\\
=&\big(L_V([\b(x),\a(y)])\circ \p_V \big)\otimes(\p_W\circ \p_W)+(\p_V\circ \p_V)\otimes \big(L_W([\b(x),\a(y)])\circ \p_W\big)\\
-&(\p_V\circ \p_V)\otimes \big(R_W([\a(x),\a^2\b^{-1}(y)])\circ \f_W^{-1}\circ\p_W^2\big)-\big(L_V(\alpha\b(x))\circ L_V(\a(y))\big)\otimes (\p_W\circ \p_W)\\
-&\big(L_V(\alpha\b(x))\circ \p_V\big)\otimes \big(\p_W\circ R_W(\a(y))\big)+\big(L_V(\alpha\b(x))\circ \p_V \big)\otimes \big(\p_W\circ R_W(\a^2\b^{-1}(y))\big)\f_W^{-1}\p_W^2\\
-&\big(\p_V\circ L_V(\a(y))\big)\otimes \big(L_W(\alpha\b(x))\circ \p_W\big)-(\p_V\circ \p_V)\otimes \big(L_W(\alpha\b(x))\circ L_W(\a(y))\big)\\
+&(\p_V\circ \p_V)\otimes \big(L_W(\alpha\b(x))\circ R_W(\a^2\b^{-1}(y))\big)\f^{-1}\p+\big(\p_V\circ L_V(\a(y))\big)\otimes \big(R_W(\alpha^2(x))\circ \f_W^{-1}\p_W^2\big)\\
+&(\p_V\circ \p_V)\otimes \big(R_W(\alpha^2(x))\circ \f_W^{-1}\p_W\circ L_W(\a(y))\big)-(\p_V\circ \p_V)\otimes \big(R_W(\alpha^2(x))\circ \f_W^{-1}\p_W\circ R_W(\a^2\b^{-1}(y)\big)\f_W^{-1}\p_W\\
+&\big(L_V(\alpha\b(y))\circ L_V(\a(x))\big)\otimes (\p_W\circ \p_W)-\big(L_V(\alpha\b(y))\circ \p_V\big)\otimes \big(\p_W\circ R_W(\a(x))\big)\\
-&\big(L_V(\alpha\b(y))\circ \p_V \big)\otimes \big(\p_W\circ R_W(\a^2\b^{-1}(x))\big)\f_W^{-1}\p_W+\big(\p_V\circ L_V(\a(x))\big)\otimes \big(L_W(\alpha\b(y))\circ \p_W\big)\\
+&(\p_V\circ \p_V)\otimes \big(L_W(\alpha\b(y))\circ L_W(\a(x))\big)-(\p_V\circ \p_V)\otimes \big(L_W(\alpha\b(y))\circ R_W(\a^2\b^{-1}(x))\big)\f^{-1}\p^2\\
+&\big(\p_V\circ L_V(\a(x))\big)\otimes \big(r_W(\alpha^2(y))\circ \f_W^{-1}\p_W^2\big)-(\p_V\circ \p_V)\otimes \big(R_W(\alpha^2(y))\circ \f_W^{-1}\p_W\circ L_W(\a(x))\big)\\
-&(\p_V\circ \p_V)\otimes \big(R_W(\alpha^2(y))\circ \f_W^{-1}\p_W\circ R_W(\a^2\b^{-1}(x)\big)\f_W^{-1}\p_W=0,
\end{align*}}

which implies that
{\small\begin{align}\label{tensor-rep-5}
&\Big(L_V\otimes \p_W+\p_V\otimes (L_W-R_W(\a\b^{-1}(\cdot))\f_W^{-1}\p_W)\Big)([\b(x),\a(y)])\circ (\p_V\otimes \p_W)\\
\nonumber=&\Big(L_V\otimes \p_W+\p_V\otimes (L_W-R_W(\a\b^{-1}(\cdot))\f_W^{-1}\p_W)\Big)(\alpha\b(x))\circ \Big(L_V\otimes \p_W+\p_V\otimes (L_W-R_W(\a\b^{-1}(\cdot))\f_W^{-1}\p_W)\Big)(\a(y))\\
\nonumber-&\Big(L_V\otimes \p_W+\p_V\otimes (L_W-R_W(\a\b^{-1}(\cdot))\f_W^{-1}\p_W)\Big)(\alpha\b(y))\circ \Big(L_V\otimes \p_W+\p_V\otimes (L_W-R_W(\a\b^{-1}(\cdot))\f_W^{-1}\p_W)\Big)(\a(x)).
\end{align}}
Similarly, we have
\begin{eqnarray}\label{tensor-rep-6}
\nonumber &&(\f_V\otimes \f_W)(\b(y))\circ \Big(L_V\otimes \p_W+\p_V\otimes (L_W-R_W(\a\b^{-1}(\cdot))\f_W^{-1}\p_W\Big)(\b(x))\\
\nonumber &&-\Big(L_V\otimes \p_W+\p_V\otimes (L_W-R_W(\a\b^{-1}(\cdot))\f_W^{-1}\p_W\Big)(\alpha\b(x))\circ (\f_V\otimes \f_W)(y)\\
&=&(\f_V\otimes \f_W)(\b(y))\circ (\f_V\otimes \f_W)(\a(x))-(\f_V\otimes \f_W)(\a(x)\cdot y)\circ (\beta_V\otimes \beta_W).
\end{eqnarray}
By \eqref{tensor-rep-1}, \eqref{tensor-rep-2}, \eqref{tensor-rep-3}, \eqref{tensor-rep-4}, \eqref{tensor-rep-5} and  \eqref{tensor-rep-6}, , we deduce that $(V\otimes W,L_V\otimes \p_W+\p_V\otimes (L_W-R_W(\a\b^{-1}(\cdot))\f_W^{-1}\p_W),R_V\otimes \f_W,\f_V\otimes \f_W,\p_V\otimes \p_W)$ is a representation of $(A,\cdot,\alpha,\b)$.
\end{proof}
\section{Cohomologies of BiHom-left-symmetric algebras}

In this section, we establish the cohomology theory of BiHom-left-symmetric algebras. See \cite{DA} for more details for the cohomology theory of left-symmetric algebras and \cite{rephomprelie} for the cohomology theory of Hom-left-symmetric algebras.

Let $(V,L,R,\phi,\psi)$ be a representation of a BiHom-left-symmetric algebra $(A,\cdot,\alpha,\b)$.
An $n$-cochains  is a linear map $f:\wedge^{n-1} A\otimes A\to V$ such that it is compatible with $\a$, $\b$ and $\f$, $\p$ in the sense that $\f  f = f \alpha^{\otimes n},
\p  f = f  \b^{\otimes n}$.
The set of $n$-cochains is denoted by $ C^{n}(A;V),\ \  \forall n\geq 0$.

\

For all $f \in C^n(A;V),~ x_1,\dots,x_{n+1} \in A$, define the operator
$\partial^n:C^n(A;V)\longrightarrow C^{n+1}(A;V)$ by
\begin{eqnarray}\label{eq:12.1}
 &&(\partial^n f)(x_1,\dots,x_{n+1})\\
 \nonumber&=&\sum_{i=1}^n(-1)^{i+1}L(\alpha^{n-1}\b^{n-1}(x_i))f(\alpha(x_1),\dots,\widehat{\alpha(x_i)},\dots,\alpha(x_{n}),x_{n+1})\\
\nonumber &&+\sum_{i=1}^n(-1)^{i+1}R(\b^{n-1}(x_{n+1}))f(\b(x_1),\dots,\widehat{\b(x_i)},\dots,\b(x_n),\alpha^{n-1}(x_i))\\
\nonumber &&-\sum_{i=1}^n(-1)^{i+1}  f(\alpha\b(x_1),\dots,\widehat{\alpha\b(x_i)}\dots,\alpha\b(x_n),\alpha^{n-1}(x_i)\cdot x_{n+1})\\
\nonumber&&+\sum_{1\leq i<j\leq n}(-1)^{i+j} f([\b(x_i),\alpha(x_j)]_C,\alpha\b(x_1),\dots,\widehat{\alpha\b(x_i)},\dots,\widehat{\alpha\b(x_j)},\dots,\alpha\b(x_{n}),\b(x_{n+1})).
\end{eqnarray}

\begin{lem}
With the above notations, for any $f\in C^n(A;V)$, we have
\begin{eqnarray*}
&& \f (\partial^n f)=(\partial^n f) \a^{\otimes n+1}, \\
&&\p (\partial^n f)=(\partial^n f) \b^{\otimes n+1}.
\end{eqnarray*}
Thus we obtain a well-defined map
$$\partial^n:C^n(A;V)\longrightarrow C^{n+1}(A;V).$$
\end{lem}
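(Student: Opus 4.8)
The plan is to prove both identities by a direct termwise verification: I would push $\phi$ (respectively $\psi$) through each of the four sums in the definition \eqref{eq:12.1} of $\partial^n f$ and check that the outcome equals that same sum evaluated at $(\alpha(x_1),\dots,\alpha(x_{n+1}))$ (respectively at $(\beta(x_1),\dots,\beta(x_{n+1}))$). Because the two statements are symmetric under interchanging the pair $(\alpha,\phi)$ with $(\beta,\psi)$, I would carry out the argument for $\phi$ in full and then note that the $\psi$-case is identical word for word.

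Four families of compatibility relations are all that is needed, and each is already available. First, the intertwining identities from \eqref{rep1}, namely $\phi L(x)=L(\alpha(x))\phi$ and $\phi R(x)=R(\alpha(x))\phi$. Second, that $\alpha$ and $\beta$ are algebra morphisms, so $\alpha(x\cdot y)=\alpha(x)\cdot\alpha(y)$. Third, the defining cochain condition $\phi f = f\alpha^{\otimes n}$, which lets me convert $\phi f(\cdots)$ into $f(\alpha(\cdots))$. Fourth, the fact that $\alpha$ is a morphism of the sub-adjacent bracket, i.e. $\alpha[x,y]_C=[\alpha(x),\alpha(y)]_C$; I would record this as a preliminary one-line observation, deducing it from $[x,y]_C=x\cdot y-\alpha^{-1}\beta(y)\cdot\alpha\beta^{-1}(x)$ together with $\alpha\beta=\beta\alpha$ and the morphism property of $\alpha$.

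With these in hand, the four sums are treated one at a time. For the first sum, applying \eqref{rep1} once turns $\phi L(\alpha^{n-1}\beta^{n-1}(x_i))f(\cdots)$ into $L(\alpha^{n}\beta^{n-1}(x_i))\,\phi f(\cdots)$, and the cochain condition rewrites $\phi f(\cdots)$ as $f(\alpha(\cdots))$; the result is exactly the $i$-th term of the first sum of $(\partial^n f)(\alpha(x_1),\dots,\alpha(x_{n+1}))$. The second sum is handled the same way through $\phi R(\beta^{n-1}(x_{n+1}))=R(\alpha\beta^{n-1}(x_{n+1}))\phi$ together with $\alpha\beta=\beta\alpha$. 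In the third sum, after moving $\phi$ across $f$ with the cochain condition, the only thing to check is $\alpha(\alpha^{n-1}(x_i)\cdot x_{n+1})=\alpha^{n}(x_i)\cdot\alpha(x_{n+1})$, which is the morphism property. In the fourth sum the same reduction leaves the identity $\alpha[\beta(x_i),\alpha(x_j)]_C=[\alpha\beta(x_i),\alpha^{2}(x_j)]_C$, which is the preliminary bracket observation combined with $\alpha\beta=\beta\alpha$.

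No genuine obstacle appears; the entire content is careful bookkeeping. Concretely, the step I expect to demand the most attention is matching the exponents of $\alpha$ and $\beta$ in every slot and confirming that the omitted (hatted) entries still align after the substitution $x_k\mapsto\alpha(x_k)$, with the fourth sum, where the bracket argument must transform correctly, being the one place where a misstep is easiest. Once each of the four terms is matched, the equality $\phi(\partial^n f)=(\partial^n f)\alpha^{\otimes n+1}$ follows, and the parallel computation with $\psi,\beta$ gives the second identity.
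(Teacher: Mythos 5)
Your proposal is correct and follows essentially the same route as the paper: the paper likewise evaluates $(\partial^n f)(\alpha(x_1),\dots,\alpha(x_{n+1}))$ term by term, commutes $\phi$ past $L$ and $R$ via \eqref{rep1}, uses the cochain condition $\phi f = f\alpha^{\otimes n}$ together with the multiplicativity of $\alpha$ and $\alpha\beta=\beta\alpha$, and then dismisses the $(\beta,\psi)$ case as identical. Your explicit preliminary remark that $\alpha[x,y]_C=[\alpha(x),\alpha(y)]_C$ is a detail the paper uses silently, but it does not change the argument.
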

\begin{proof}
Let $f \in C^n(A;V),~ x_1,\dots,x_{n+1} \in A$, we have
\begin{eqnarray*}
 &&(\partial^n f)(\a(x_1),\dots,\a(x_{n+1}))\\
 \nonumber&=&\sum_{i=1}^n(-1)^{i+1}L(\alpha^{n}\b^{n-1}(x_i))f(\alpha^2(x_1),\dots,\widehat{\alpha^2(x_i)},\dots,\alpha^2(x_{n}),\a(x_{n+1}))\\
\nonumber &&+\sum_{i=1}^n(-1)^{i+1}R(\a\b^{n-1}(x_{n+1}))f(\a\b(x_1),\dots,\widehat{\a\b(x_i)},\dots,\a\b(x_n),\alpha^n(x_i))\\
\nonumber &&-\sum_{i=1}^n(-1)^{i+1}  f(\alpha^2\b(x_1),\dots,\widehat{\alpha^2\b(x_i)}\dots,\alpha^2\b(x_n),\alpha^n(x_i)\cdot x_{n+1}))\\
\nonumber&&+\sum_{1\leq i<j\leq n}(-1)^{i+j} f([\a\b(x_i),\alpha^2(x_j)]_C,\alpha^2\b(x_1),\dots,\widehat{\alpha^2\b(x_i)},\dots,\widehat{\alpha^2\b(x_j)},\dots,\alpha^2\b(x_{n}),\a\b(x_{n+1}))\\
\nonumber&=&\sum_{i=1}^n(-1)^{i+1}\f L(\alpha^{n-1}\b^{n-1}(x_i))f(\alpha(x_1),\dots,\widehat{\alpha(x_i)},\dots,\alpha(x_{n}),x_{n+1})\\
\nonumber &&+\sum_{i=1}^n(-1)^{i+1}\f R(\b^{n-1}(x_{n+1}))f(\b(x_1),\dots,\widehat{\b(x_i)},\dots,\b(x_n),\alpha^n(x_i))\\
\nonumber &&-\sum_{i=1}^n(-1)^{i+1} \f f(\alpha\b(x_1),\dots,\widehat{\alpha\b(x_i)}\dots,\alpha\b(x_n),\alpha^n(x_i)\cdot x_{n+1}))\\
\nonumber&&+\sum_{1\leq i<j\leq n}(-1)^{i+j} \f f([\b(x_i),\alpha(x_j)]_C,\alpha\b(x_1),\dots,\widehat{\alpha\b(x_i)},\dots,\widehat{\alpha\b(x_j)},\dots,\alpha\b(x_{n}),\b(x_{n+1}))\\
\nonumber&=&\f (\partial^n f)(x_1,\dots,x_{n+1}).
\end{eqnarray*}
Similarly, we obtain $(\partial^n f)(\b(x_1),\dots,\b(x_{n+1}))=\p (\partial^n f)(x_1,\dots,x_{n+1}).$
\end{proof}
\begin{thm}\label{thm:operator}
The operator $\partial^n$ defined as above satisfies $\partial^{n+1}\circ\partial^{n}=0$.
\end{thm}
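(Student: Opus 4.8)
The plan is to verify $\partial^{n+1}\circ\partial^n=0$ by a direct expansion, organized so that the cancellations follow recognizable patterns. First I would fix $f\in C^n(A;V)$ and arbitrary $x_1,\dots,x_{n+2}\in A$, set $g=\partial^n f$, and substitute the four-term formula \eqref{eq:12.1} for $g$ into each of the four sums that make up $(\partial^{n+1}g)(x_1,\dots,x_{n+2})$. This produces roughly sixteen families of terms, which I would sort according to the outermost operator they carry: those of the form $L(\cdots)L(\cdots)f$, $L(\cdots)R(\cdots)f$, $R(\cdots)R(\cdots)f$, $R(\cdots)L(\cdots)f$, the terms in which $f$ is evaluated with a product $(\cdots)\c(\cdots)$ in its last slot, and the terms in which $f$ is evaluated on a bracket $[\cdot,\cdot]_C$ (singly or doubly). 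Keeping the bookkeeping manageable requires first using the cochain compatibility $\f f=f\a^{\otimes n}$, $\p f=f\b^{\otimes n}$ together with \eqref{rep1} to push all occurrences of $\a,\b,\f,\p$ to uniform positions, so that terms differing only by the names of the summation indices can be compared directly.

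The cancellations then fall into three groups. The terms in which $f$ is evaluated on two nested brackets cancel exactly as in the Chevalley--Eilenberg complex of the sub-adjacent BiHom-Lie algebra $A^C$: the antisymmetry $[\b(x),\a(y)]_C=-[\b(y),\a(x)]_C$ and the BiHom-Jacobi identity \eqref{Bihom-jaco} force the coefficients to vanish. The terms built from $L$ together with the single-bracket terms and the product terms whose last slot is produced by $L$ cancel using the representation axiom \eqref{rep2}, which is precisely the statement that $L$ represents $A^C$ (Proposition~\ref{important-rep}(i)), combined with the defining BiHom-left-symmetric identity of $(A,\c,\a,\b)$. Finally, the terms carrying an $R$, the mixed $R$--$L$ terms, and the remaining product terms cancel by invoking \eqref{rep3}; this is the identity tying the right action $R$ to $L$ and to multiplication, and it is exactly what is needed to match an $R$ coming from the outer $\partial^{n+1}$ against a product appearing inside $g=\partial^n f$.

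The main obstacle is not any single identity but the twist bookkeeping: in the BiHom setting every argument carries a word in $\a$ and $\b$ and every coefficient a word in $\f$ and $\p$, so a pair of terms that would cancel in the untwisted pre-Lie complex will here cancel only after one checks that the accumulated twists agree slot by slot. Concretely, when the outer differential applies $\a^{n}\b^{n}$-type prefactors to arguments that already carry $\a,\b$ from the inner differential, one must repeatedly commute these past $L$, $R$ and $f$ using \eqref{rep1} and the cochain compatibility, and verify that the resulting powers coincide; this is where essentially all of the labor lies. A cleaner, more structural alternative, which I would record as a consistency check, is to identify $C^n(A;V)=\Hom(\wedge^{n-1}A\otimes A,V)$ with $\Hom(\wedge^{n-1}A,\Hom(A,V))$ and to recognize $\partial^n$ as a Chevalley--Eilenberg-type differential of $A^C$ with coefficients in an induced representation on $\Hom(A,V)$ built from the operator $\r$ of Proposition~\ref{important-rep}(ii); granting the Lie-algebra identity $d_{CE}^2=0$, the vanishing $\partial^{n+1}\partial^n=0$ becomes immediate, and the problem reduces to checking that this identification intertwines the two differentials.
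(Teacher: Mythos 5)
Your proposal follows essentially the same route as the paper: a direct expansion of $\partial^{n+1}\circ\partial^n(f)$ into the four families of terms coming from \eqref{eq:12.1}, followed by grouped cancellations using \eqref{rep2} for the $L$--$L$ and single-bracket terms, \eqref{rep3} for the terms involving $R$, the defining BiHom-left-symmetric identity for the product terms, the BiHom-Jacobi identity \eqref{Bihom-jaco} for the doubly nested brackets, and index-relabeling for the rest, all after normalizing the $\a,\b,\f,\p$ twists via \eqref{rep1} and the cochain compatibility. This is exactly the structure of the paper's proof, so the plan is sound; only the term-by-term bookkeeping remains to be written out.
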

\begin{proof}
 Let $f \in C^n(A;V),~ x_1,\dots,x_{n+1},x_{n+2} \in A$, we have
\begin{eqnarray}
 \nonumber&&\partial^{n+1}\circ\partial^n (f)(x_1,\dots,x_{n+2})\\
 \nonumber&=&\sum_{i=1}^{n+1}(-1)^{i+1}L(\alpha^{n}\b^{n}(x_i))\partial^nf(\alpha(x_1),\dots,\widehat{\alpha(x_i)},\dots,\alpha(x_{n+1},x_{n+2}))\\
\nonumber &&+\sum_{i=1}^{n+1}(-1)^{i+1}R(\b^n(x_{n+2}))\partial^nf(\b(x_1),\dots,\widehat{\b(x_i)},\dots,\b(x_{n+1}),\alpha^n(x_i))\\
\nonumber &&-\sum_{i=1}^{n+1}(-1)^{i+1}  \partial^nf(\alpha\b(x_1),\dots,\widehat{\alpha\b(x_i)}\dots,\alpha\b(x_{n+1}),\alpha^n(x_i)\cdot x_{n+2})\\
\nonumber&&+\sum_{1\leq i<j\leq n+1}(-1)^{i+j} \partial^nf([\b(x_i),\alpha(x_j)]_C,\alpha\b(x_1),\dots,\widehat{\alpha\b(x_i)},\dots,\widehat{\alpha\b(x_j)},\dots,\alpha\b(x_{n+1}),\b(x_{n+2}))\\
\nonumber&&=\Omega_1+\Omega_2-\Omega_3+\Omega_4.
\end{eqnarray}
Where
{\small\begin{align}
 &\Omega_1
 \nonumber=\sum_{i=1}^{n+1}(-1)^{i+1}L(\alpha^{n}\b^{n}(x_i))\partial^nf(\alpha(x_1),\dots,\widehat{\alpha(x_i)},\dots,\alpha(x_{n+1}),x_{n+2})\\
 \label{e11}&=\sum_{i=1}^{n+1}\sum_{\stackrel{j=1}{j<i}}^n(-1)^{i+j}L(\alpha^{n}\b^{n}(x_i))L(\alpha^{n}\b^{n-1}(x_j))f(\alpha^2(x_1),..,\widehat{x_{j,i}}
,..,\alpha^2(x_{n+1}),x_{n+2})\\
\label{e12} &+\sum_{i=1}^{n+1}\sum_{\stackrel{j=1}{j>i}}^n(-1)^{i+j-1}L(\alpha^{n}\b^{n}(x_i))L(\alpha^{n}\b^{n-1}(x_j))f(\alpha^2(x_1),..,\widehat{x_{i,j}},..,\alpha^2(x_{n+1}),x_{n+2})\\
\label{e13} &+\sum_{i=1}^{n+1}\sum_{\stackrel{j=1}{j<i}}^n(-1)^{i+j}L(\alpha^{n}\b^{n}(x_i))R(\b^{n-1}(x_{n+2}))f(\alpha\b(x_1),..,\widehat{x_{j,i}},..,\alpha\b(x_{n+1}),
\alpha^n(x_j))\\
\label{e14} &+\sum_{i=1}^{n+1}\sum_{\stackrel{j=1}{j>i}}^n(-1)^{i+j-1}L(\alpha^{n}\b^{n}(x_i))R(\b^{n-1}(x_{n+2}))f(\alpha\b(x_1),..,\widehat{x_{i,j}},..,\alpha\b(x_{n+1}),\alpha^n(x_{j}))\\
\label{e15} &-\sum_{i=1}^{n+1}\sum_{\stackrel{j=1}{j<i}}^n(-1)^{i+j}L(\alpha^{n}\b^{n}(x_i))f(\alpha^2\b(x_1),..,\widehat{x_{j,i}},..,\alpha^2\b(x_{n+1}),\alpha^n(x_j)\cdot x_{n+2})\\
\label{e16} &-\sum_{i=1}^{n+1}\sum_{\stackrel{j=1}{j>i}}^n(-1)^{i+j-1}L(\alpha^{n}\b^{n}(x_i))f(\alpha^2\b(x_1),..,\widehat{x_{i,j}},..,\alpha^2\b(x_{n+1}),\alpha^n(x_j)\cdot x_{n+2})\\
\label{e17} &-\sum_{i=1}^{n+1}\sum_{{j<k<i}}(-1)^{i+j+k}L(\alpha^{n}\b^{n}(x_i))f([\alpha\b(x_j),\alpha^2(x_k)]_C,\alpha^2\b(x_1),..,\widehat{x_{j,k,i}},..,\alpha^2\b(x_{n+1}),\b(x_{n+2}))\\
\label{e18} &+\sum_{i=1}^{n+1}\sum_{{j<i<k}}(-1)^{i+j+k}L(\alpha^{n}\b^{n}(x_i))f([\alpha\b(x_j),\alpha^2(x_k)]_C,\alpha^2\b(x_1),..,\widehat{x_{j,i,k}},..,\alpha^2\b(x_{n+1}),\b(x_{n+2}))\\
\label{e19} &-\sum_{i=1}^{n+1}\sum_{{i<j<k}}(-1)^{i+j+k}L(\alpha^{n}\b^{n}(x_i))f([\alpha\b(x_j),\alpha^2(x_k)]_C,\alpha^2\b(x_1),..,\widehat{x_{i,j,k}},..,\alpha^2\b(x_{n+1}),\b(x_{n+2}))
\end{align}}
and
{\small\begin{align}
& \Omega_2\nonumber=\sum_{i=1}^{n+1}(-1)^{i+1}R(\b^n(x_{n+2}))\partial^nf(\b(x_1),..,\widehat{\b(x_i)},..,\b(x_{n+1}),\alpha^n(x_i))\\
 \label{e21}&=\sum_{i=1}^{n+1}\sum_{\stackrel{j=1}{j<i}}^n(-1)^{i+j}R(\b^n(x_{n+2}))L(\alpha^{n-1}\b^{n}(x_j))
 f(\a\b(x_1),..,\widehat{x_{j,i}},..,\a\b(x_{n+1}),\alpha^n(x_i))\\
 \label{e22}&+\sum_{i=1}^{n+1}\sum_{\stackrel{j=1}{j>i}}^n(-1)^{i+j-1}R(\b^n(x_{n+2}))L(\alpha^{n-1}\a^{n-1}\b^{n}(x_j))
 f(\a\b(x_1),..,\widehat{x_{i,j}},..,\a\b(x_{n+1}),\alpha^n(x_i))\\
 \label{e23}&+\sum_{i=1}^{n+1}\sum_{\stackrel{j=1}{j<i}}^n(-1)^{i+j}R(\b^n(x_{n+2}))R(\alpha^n\b^{n-1}(x_i))
 f(\b^2(x_1),..,\widehat{x_{j,i}},..,\a^{n-1}\b(x_j))\\
 \label{e24}&+\sum_{i=1}^{n+1}\sum_{\stackrel{j=1}{j>i}}^n(-1)^{i+j-1}R(\b^n(x_{n+2}))R(\alpha^n\b^{n-1}(x_i))
 f(\b^2(x_1),..,\widehat{x_{i,j}},..,\a^{n-1}\b(x_j))\\
  \label{e25}&-\sum_{i=1}^{n+1}\sum_{\stackrel{j=1}{j<i}}^n(-1)^{i+j}R(\b^n(x_{n+2}))
 f(\a\b^2(x_1),..,\widehat{x_{j,i}},..,\a\b^2(x_{n+1}),\a^{n-1}\b(x_j)\cdot\a^n(x_i))\\
\label{e26}&-\sum_{i=1}^{n+1}\sum_{\stackrel{j=1}{j>i}}^n(-1)^{i+j-1}R(\b^n(x_{n+2}))
 f(\a\b^2(x_1),..,\widehat{x_{i,j}},..,\a\b^2(x_{n+1}),\a^{n-1}\b(x_j)\cdot\a^n(x_i))\\
\label{e27} &-\sum_{i=1}^{n+1}\sum_{{j<k<i}}(-1)^{i+j+k}R(\b^n(x_{n+2}))f([\b^2(x_j),\alpha^2(x_k)]_C,\alpha\b^2(x_1),..,\widehat{x_{j,k,i}},..,\alpha\b^2(x_{n+1}),\a^n\b(x_{i}))\\
\label{e28} &+\sum_{i=1}^{n+1}\sum_{{j<i<k}}(-1)^{i+j+k}R(\b^n(x_{n+2}))f([\b^2(x_j),\alpha^2(x_k)]_C,\alpha\b^2(x_1),..,\widehat{x_{j,i,k}},..,\alpha\b^2(x_{n+1}),\a^n\b(x_{i}))\\
\label{e29} &-\sum_{i=1}^{n+1}\sum_{{i<j<k}}(-1)^{i+j+k}R(\b^n(x_{n+2}))f([\b^2(x_j),\alpha^2(x_k)]_C,\alpha\b^2(x_1),..,\widehat{x_{i,j,k}},..,\alpha\b^2(x_{n+1}),\a^n\b(x_{i})),
\end{align}}

and
{\small\begin{align}
\nonumber&\Omega_3=\sum_{i=1}^{n+1}(-1)^{i+1}  \partial^nf(\alpha\b(x_1),..,\widehat{\alpha\b(x_i)}..,\alpha\b(x_{n+1}),\alpha^n(x_i)\cdot x_{n+2})\\
\label{e31}&=\sum_{i=1}^{n+1}\sum_{\stackrel{j=1}{j<i}}^n(-1)^{i+j}L(\a^n\b^n(x_{j})) f(\a^2\b(x_1),..,\widehat{x_{j,i}},..,\a^2\b(x_{n+1}),\alpha^n(x_i)\cdot x_{n+2})\\
  \label{e32}&+\sum_{i=1}^{n+1}\sum_{\stackrel{j=1}{j>i}}^n(-1)^{i+j-1}L(\a^n\b^n(x_{j})) f(\a^2\b(x_1),..,\widehat{x_{i,j}},..,\a^2\b(x_{n+1}),\alpha^n(x_i)\cdot x_{n+2})\\
 \label{e33}&+\sum_{i=1}^{n+1}\sum_{\stackrel{j=1}{j<i}}^n(-1)^{i+j}R(\alpha^n\b^{n-1}(x_i)\cdot\b^{n-1}(x_{n+2}))
 f(\a\b^2(x_1),..,\widehat{x_{j,i}},..,\a^2\b(x_{n+1}),\a^n\b(x_j))\\
 \label{e34}&+\sum_{i=1}^{n+1}\sum_{\stackrel{j=1}{j>i}}^n(-1)^{i+j}R(\alpha^n\b^{n-1}(x_i)\cdot\b^{n-1}(x_{n+2}))
 f(\a\b^2(x_1),..,\widehat{x_{i,j}},..,\a^2\b(x_{n+1}),\a^n\b(x_j))\\
\label{e35}&-\sum_{i=1}^{n+1}\sum_{\stackrel{j=1}{j<i}}^n(-1)^{i+j} f(\a^2\b^2(x_1),..,\widehat{x_{j,i}},..,\a^2\b^2(x_{n+1}),(\a^n\b(x_{j}))\cdot(\alpha^n(x_i)\cdot x_{n+2}))\\
 \label{e36}&-\sum_{i=1}^{n+1}\sum_{\stackrel{j=1}{j>i}}^n(-1)^{i+j} f(\a^2\b^2(x_1),..,\widehat{x_{i,j}},..,\a^2\b^2(x_{n+1}),(\a^n\b(x_{j}))\cdot(\alpha^n(x_i)\cdot x_{n+2}))\\
 \label{e37} &-\sum_{i=1}^{n+1}\sum_{{j<k<i}}(-1)^{i+j+k}f([\alpha\b^2(x_j),\alpha^2\beta(x_k)]_C,\alpha^2\b^2(x_1),..,\widehat{x_{j,k,i}},..,\alpha^2\b^2(x_{n+1}),\a^n\b(x_i)\cdot \b(x_{n+2}))\\
\label{e38} &+\sum_{i=1}^{n+1}\sum_{{j<i<k}}(-1)^{i+j+k}f([\alpha\b^2(x_j),\alpha^2\beta(x_k)]_C,\alpha^2\b^2(x_1),..,\widehat{x_{j,i,k}},..,\alpha^2\b^2(x_{n+1}),\a^n\b(x_i)\cdot\b(x_{n+2}))\\
\label{e39} &-\sum_{i=1}^{n+1}\sum_{{i<j<k}}(-1)^{i+j+k}f([\alpha\b^2(x_j),\alpha^2\beta(x_k)]_C,\alpha^2\b^2(x_1),..,\widehat{x_{i,j,k}},..,\alpha^2\b^2(x_{n+1}),\a^n\b(x_i)\cdot \b(x_{n+2}))
\end{align}}
and
{\small\begin{align}
\nonumber&\Omega_4= \sum_{1\leq i<j\leq n+1}(-1)^{i+j} \partial^nf([\b(x_i),\alpha(x_j)]_C,\alpha\b(x_1),..,\widehat{x_{i,j}},..,\alpha\b(x_{n+1}),\b(x_{n+2}))\\
\label{e41}&=\sum_{\stackrel{1\leq i\leq j\leq n+1}{}}(-1)^{i+j}L([\a^{n-1}\b^n(x_i),\a^n\b^{n-1}(x_j)]_C) f(\a^2\b(x_1),..,\widehat{x_{i,j}},..,\a^2\b(x_{n+1}),\b(x_{n+2}))\\
\label{e42}&+\sum_{\stackrel{1\leq i\leq j}{}}^{ n+1}\sum_{{k<i}}(-1)^{i+j+k+1}L(\a^n\b^n(x_k)) f([\a\b(x_i),\a^2(x_j)]_C ,\a^2\b(x_1),..,\widehat{x_{k,i,j}},..\a^2\b(x_{n+1}),\b(x_{n+2}))\\
\label{e43}&+\sum_{\stackrel{1\leq i\leq j}{}}^{ n+1}\sum_{{i<k<j}}(-1)^{i+j+k}L(\a^n\b^n(x_k))f([\a\b(x_i),\a^2(x_j)]_C ,\a^2\b(x_1),..,\widehat{x_{i,k,j}},..\a^2\b(x_{n+1}),\b(x_{n+2}))\\
\label{e44}&+\sum_{\stackrel{1\leq i\leq j}{}}^{ n+1}\sum_{{j<k}}(-1)^{i+j+k+1}L(\a^n\b^n(x_k)) f([\a\b(x_i),\a^2(x_j)]_C ,\a^2\b(x_1),..,\widehat{x_{i,j,k}},..\a^2\b(x_{n+1}),\b(x_{n+2}))\\
\label{e45}&+\sum_{\stackrel{1\leq i\leq j}{}}^{ n+1}(-1)^{i+j} R(\b^n(x_{n+2})) f(\a\b^2(x_1),..,\widehat{x_{i,j}},..,\a\b^2(x_{n+1}),[\a^{n-1}\b(x_i),\a^n(x_j)]_C)\\
\label{e46}&-\sum_{\stackrel{1\leq i\leq j}{}}^{n+1}\sum_{{k<i}}(-1)^{i+j+k} R(\b^n(x_{n+2})) f([\b^2(x_i),\a\b(x_j)]_C,\a\b^2(x_1),..,\widehat{x_{k,i,j}},..,\a\b^2(x_{n+1}),\a^n\b(x_k))\\
\label{e47}&+\sum_{\stackrel{1\leq i\leq j}{}}^{ n+1}\sum_{{i<k<j}}(-1)^{i+j+k} R(\b^n(x_{n+2})) f([\b^2(x_i),\a\b(x_j)]_C,\a\b^2(x_1),..,\widehat{x_{i,k,j}},..,\a\b^2(x_{n+1}),\a^n\b(x_k))\\
\label{e48}&-\sum_{\stackrel{1\leq i\leq j}{}}^{n+1}\sum_{{j<k}}(-1)^{i+j+k} R(\b^n(x_{n+2})) f([\b^2(x_i),\a\b(x_j)]_C,\a\b^2(x_1),..,\widehat{x_{i,j,k}},..,\a\b^2(x_{n+1}),\a^n\b(x_k))
\end{align}
\begin{align}
\label{e51}-&\sum_{\stackrel{1\leq i\leq j}{}}^{n+1}(-1)^{i+j}  f(\a^2\b^2(x_1),..,\widehat{x_{i,j}},..,\a^2\b^2(x_{n+1}),[\a^{n-1}\b(x_i),\a^n(x_j)]_C\cdot \b^n(x_{n+2}))\\
\label{e52}+&\sum_{\stackrel{1\leq i\leq j}{}}^{n+1}\sum_{{k<i}}(-1)^{i+j+k}  f([\a\b^2(x_i),\a^2\b(x_j)],\a^2\b^2(x_1),..,\widehat{x_{k,i,j}},..,\a^2\b^2(x_{n+1}),\a^n\b(x_k)\cdot \b(x_{n+2}))\\
\label{e53}-&\sum_{\stackrel{1\leq i\leq j}{}}^{n+1}\sum_{{i<k<j}}(-1)^{i+j+k} f([\a\b^2(x_i),\a^2\b(x_j)],\a^2\b^2(x_1),..,\widehat{x_{i,k,j}},..,\a^2\b^2(x_{n+1}),\a^n\b(x_k)\cdot \b(x_{n+2}))\\
\label{e54}+&\sum_{\stackrel{1\leq i\leq j}{}}^{n+1}\sum_{{j<k}}(-1)^{i+j+k}f([\a\b^2(x_i),\a^2\b(x_j)],\a^2\b^2(x_1),..,\widehat{x_{i,j,k}},..,\a^2\b^2(x_{n+1}),\a^n\b(x_k)\cdot \b(x_{n+2}))\\
\label{e55}-&\sum_{\stackrel{1\leq i\leq j}{}}^{n+1}\sum_{{l<i}}(-1)^{i+j+l}  f([[\b^2(x_i),\a\b(x_j)],\a^2\b(x_l)],\a^2\b^2(x_1),..,\widehat{x_{l,i,j}},..,
\a^2\b^2(x_{n+1}),\b^2(x_{n+2}))\\
\label{e56}+&\sum_{\stackrel{1\leq i\leq j}{}}^{n+1}\sum_{{i<l<j}}(-1)^{i+j+l}  f([[\b^2(x_i),\a\b(x_j)],\a^2\b(x_l)],\a^2\b^2(x_1),..,\widehat{x_{i,l,j}},..,
\a^2\b^2(x_{n+1}),\b^2(x_{n+2}))\\
\label{e57}-&\sum_{\stackrel{1\leq i\leq j}{}}^{n+1}\sum_{{j<l}}(-1)^{i+j+l}  f([[\b^2(x_i),\a\b(x_j)],\a^2\b(x_l)],\a^2\b^2(x_1),..,\widehat{x_{i,j,l}},..,
\a^2\b^2(x_{n+1}),\b^2(x_{n+2}))\\
\nonumber&+\sum_{\stackrel{1\leq i\leq j}{}}^{n+1}\sum_{{k<l<i}}(-1)^{i+j+l+k}  f([\a\b^2(x_k),\a^2\b(x_l)]_C,[\a\b^2(x_i),\a^2(x_j)]_C,\a^2\b^2(x_1),..,\\
\label{e58}&\widehat{x_{k,l,i,j}},..,
\a^2\b^2(x_{n+1}),\b^2(x_{n+2}))\\
\nonumber&-\sum_{\stackrel{1\leq i\leq j}{}}^{n+1}\sum_{{k<i<l<j}}(-1)^{i+j+l+k}  f([\a\b^2(x_k),\a^2\b(x_l)]_C,[\a\b^2(x_i),\a^2(x_j)]_C,\a^2\b^2(x_1),..,\\
\label{e59}&\widehat{x_{k,i,l,j}},..,
\a^2\b^2(x_{n+1}),\b^2(x_{n+2}))\\
\nonumber&+\sum_{\stackrel{1\leq i\leq j}{}}^{n+1}\sum_{{k<i<j<l}}(-1)^{i+j+l+k}  f([\a\b^2(x_k),\a^2\b(x_l)]_C,[\a\b^2(x_i),\a^2(x_j)]_C,\a^2\b^2(x_1),..,\\
\label{e510}&\widehat{x_{k,i,j,l}},..,\a^2\b^2(x_{n+1}),\b^2(x_{n+2}))\\
\nonumber&-\sum_{\stackrel{1\leq i\leq j}{}}^{n+1}\sum_{{i<k<l<j}}(-1)^{i+j+l+k}  f([\a\b^2(x_k),\a^2\b(x_l)]_C,[\a\b^2(x_i),\a^2(x_j)]_C,\a^2\b^2(x_1),..,\\
\label{e511}&\widehat{x_{i,k,l,j}},..,
\a^2\b^2(x_{n+1}),\b^2(x_{n+2}))\\
\nonumber&+\sum_{\stackrel{1\leq i\leq j}{}}^{n+1}\sum_{{i<k<j<l}}(-1)^{i+j+l+k}  f([\a\b^2(x_k),\a^2\b(x_l)]_C,[\a\b^2(x_i),\a^2(x_j)]_C,\a^2\b^2(x_1),..,\\
\label{e512}&\widehat{x_{i,k,j,l}},..,
\a^2\b^2(x_{n+1}),\b^2(x_{n+2}))\\
\nonumber&-\sum_{\stackrel{1\leq i\leq j}{}}^{n+1}\sum_{{j<k<l}}(-1)^{i+j+l+k}  f([\a\b^2(x_k),\a^2\b(x_l)]_C,[\a\b^2(x_i),\a^2(x_j)]_C,\a^2\b^2(x_1),..,\\
\label{e513}&\widehat{x_{i,j,k,l}},..,
\a^2\b^2(x_{n+1}),\b^2(x_{n+2})).
\end{align}}
In $\Omega_1,... \Omega_4$.  $\widehat{x_{i, j, k, l}}$ means that we omits the items $x_i, x_j, x_k, x_l$.

Using Eq.\eqref{rep2}, we have
$\eqref{e11}+\eqref{e12}+\eqref{e41}=0.$
Using Eq.\eqref{rep3}, we have
$$\eqref{e13}+\eqref{e21}+\eqref{e23}-\eqref{e33}=\eqref{e14}+\eqref{e22}+\eqref{e24}-\eqref{e34}=0.$$
 It evident to see that
$$\eqref{e19}+\eqref{e42}=\eqref{e18}+\eqref{e43}=\eqref{e17}+\eqref{e44}=\eqref{e15}-\eqref{e32}=\eqref{e16}-\eqref{e31}=\eqref{e25}+\eqref{e26}+\eqref{e45}=0,$$
and
$$ \eqref{e27}+\eqref{e48}=\eqref{e28}+\eqref{e47}=\eqref{e29}+\eqref{e46}=\eqref{e37}-\eqref{e54}=\eqref{e38}-\eqref{e53}=\eqref{e39}-\eqref{e52}=0.$$

Using identity of BiHom-left-symmetric algebra Eq.$\eqref{id-Bihom-pre}$, we have
$$
\eqref{e35}+\eqref{e36}-\eqref{e51}=0.
$$
Since $A^C$ is BiHom-Lie algebra, then using the BiHom-Jacobi identity \eqref{Bihom-jaco} we have $\eqref{e55}+\eqref{e56}+\eqref{e57}=0$. Finally, it easy to see that $\eqref{e58}+\dots+\eqref{e513}=0$.
\end{proof}

Denote the set of  $n$-cocycles by $Z^n(A;V)=\ker(\partial^n)$ and the set of  $n$-cobords by $B^n(A;V)=Im(\partial^{n-1})$. We denote by $H^n(A;V)=Z^n(A;V)/B^n(A;V)$ the corresponding cohomology groups of the BiHom-left-symmetric algebra $(A,\cdot,\alpha,\b)$ with the coefficient in the representation $(V,L,R,\f,\p)$.

\section{Linear deformations of BiHom-left-symmetric algebras}
In this section, we study linear deformations of BiHom-left-symmetric algebras using the cohomology defined in the previous section, and introduce the notion of a Nijenhuis operator on a BiHom-left-symmetric algebra. We show that a Nijenhuis operator gives rise to a trivial deformation.

Let $(A,\cdot,\alpha,\b)$ be a BiHom-left-symmetric algebra. In the sequel, we will also denote the BiHom-left-symmetric multiplication $\cdot$ by $\Pi$.

\begin{defi}
Let $(A,\Pi,\a,\b)$ be a BiHom-left-symmetric algebra and $\pi \in C^2(A;A)$ be a bilinear operator, satisfying $\pi\circ (\alpha\otimes\alpha)=\alpha\circ \pi$ and $\pi \circ (\b\otimes\b)=\b\circ \pi$. If  $$\Pi_t=\Pi+t\pi$$ is still a BiHom-left-symmetric multiplication on $A$ for all $t$, we say that $\omega$ generates a (one-parameter) linear deformation of a BiHom-left-symmetric algebra $(A,\Pi,\a,\b)$.
\end{defi}
It is direct to check that $\Pi_t=\Pi+t\pi$ is a linear deformation of a Hom-left-symmetric algebra $(A,\Pi,\alpha,\b)$ if and only if
 for any $x,y,z\in \g$, the following two equalities are satisfied:
\begin{align}
\nonumber& \pi(\b(x),\a(y))\cdot \b(z)+\pi(\b(x)\cdot \a(y),\b(z))-\a\b(x)\cdot \pi(\a(y),z)-\pi(\alpha\b(x),\a(y)\cdot z) \\
\label{eq:4.1}  -&\pi(\b(y),\a(x))\cdot \b(z)-\pi(\b(y)\cdot \a(x),\b(z))+\alpha\b(y)\cdot \pi(\a(x),z)+\pi(\alpha\b(y),\a(x)\cdot z)=0,\\
\label{eq:4.2}& \pi(\pi(\b(x),\a(y)),\b(z))-\pi(\alpha\b(x),\pi(\a(y),z))-
\pi(\pi(\b(y),\a(x)),\b(z))+\pi(\alpha\b(y),\pi(\a(x),z))=0.
\end{align}
Obviously, \eqref{eq:4.1} means that $\pi$ is a $2$-cocycle of the BiHom-left-symmetric algebra $(A,\cdot,\alpha,\b)$ with the coefficient in the adjoint representation, i.e. $$\partial^2\pi(x,y,z)=0.$$ Furthermore, \eqref{eq:4.2} means that $(A,\pi,\alpha,\b)$ is  a BiHom-left-symmetric algebra.

\begin{defi}
Let $(A,\Pi,\a,\b)$ be a BiHom-left-symmetric algebra. Two linear deformations $\Pi_t^1=\Pi+t\pi_1$ and
$\Pi_t^2=\Pi+t\pi_2$ are said to be  equivalent if there exists a
linear operator $N\in \gl(A)$ such that $T_t={\Id}+tN$ is a
BiHom-left-symmetric algebra homomorphism from $(A,\Pi_t^2,\alpha,\b)$ to $(A,\Pi_t^1,\alpha,\b)$. In particular, a
linear deformation $\Pi_t=\Pi+t\pi$ of a BiHom-left-symmetric algebra $(A,\Pi,\alpha,\b)$ is said to be  trivial if there exists a linear operator $N\in \gl(A)$ such that $T_t={\Id}+tN$ is a
BiHom-left-symmetric algebra homomorphism from  $(A,\Pi_t,\alpha,\b)$ to  $(A,\Pi,\alpha,\b)$.
\end{defi}

Let $T_t={\Id}+tN$ be a homomorphism from
$(A,\Pi_t^2,\alpha,\b)$ to $(A,\Pi_t^1,\alpha,\b)$. By \eqref{bihomo-2}-\eqref{bihomo-3}, we have
\begin{equation}\label{equi-deformation-0}
N\circ\alpha=\alpha\circ N\ \textrm{and}\  N \circ \b= \b \circ N.
\end{equation}
Then by \eqref{bihomo-1} we obtain
\begin{eqnarray}
\label{equi-deformation-1}&\pi_2(x,y)-\pi_1(x,y)=N(x)\cdot y+x\cdot N(y)-N(x\cdot y),&\\
\label{equi-deformation-2}&\pi_1(x,N(y))+\pi_1(N(x),y)=N(\pi_2(x,y))-N(x)\cdot N(y),&\\
\label{equi-deformation-3}&\pi_1(N(x),N(y))=0.&
\end{eqnarray}
Note that \eqref{equi-deformation-1} means that $\pi_2-\pi_1=\partial^1(N)$. Thus, we have

\begin{thm}\label{thm:iso3} Let $(A,\Pi,\alpha,\b)$ be a BiHom-left-symmetric algebra.
  If two linear deformations $\Pi_t^1=\Pi+t\pi_1$ and
$\Pi_t^2=\Pi+t\pi_2$ are equivalent, then $\pi_1$ and $\pi_2$ are in the same cohomology class of $H^2(A;A)$.
\end{thm}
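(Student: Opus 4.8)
The plan is to read the cohomological content straight off the homomorphism condition that witnesses the equivalence. Since $\Pi_t^1$ and $\Pi_t^2$ are equivalent, by definition there is a linear operator $N\in\gl(A)$ such that $T_t=\Id+tN$ is a BiHom-left-symmetric algebra homomorphism from $(A,\Pi_t^2,\alpha,\b)$ to $(A,\Pi_t^1,\alpha,\b)$. First I would expand the three defining conditions \eqref{bihomo-1}--\eqref{bihomo-3} of a morphism for $T_t$. The compatibility of $T_t$ with $\alpha$ and $\b$ gives \eqref{equi-deformation-0}, namely $N\alpha=\alpha N$ and $N\b=\b N$; in particular this shows that $N$, regarded as an element of $C^1(A;A)$, is a genuine $1$-cochain, since for the adjoint representation the structure maps $\phi,\psi$ are just $\alpha,\b$ and the cochain condition $\alpha N=N\alpha$, $\b N=N\b$ is exactly what we obtained.

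Next I would expand the multiplicativity condition $T_t\bigl(\Pi_t^2(x,y)\bigr)=\Pi_t^1\bigl(T_t(x),T_t(y)\bigr)$ and collect powers of $t$. The constant term reproduces the fact that $\Pi$ is a homomorphism to itself, the coefficient of $t$ yields \eqref{equi-deformation-1},
\[
\pi_2(x,y)-\pi_1(x,y)=N(x)\cdot y+x\cdot N(y)-N(x\cdot y),
\]
and the coefficients of $t^2$ and $t^3$ give \eqref{equi-deformation-2} and \eqref{equi-deformation-3}, which play no role here.

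The key step is to recognize the right-hand side of \eqref{equi-deformation-1} as a coboundary. Specializing the coboundary operator \eqref{eq:12.1} to $n=1$ and to the adjoint representation $(A,\ell,r,\alpha,\b)$, the double (bracket) sum is empty and each single sum has one term, so
\[
(\partial^1 N)(x,y)=\ell_x\bigl(N(y)\bigr)+r_y\bigl(N(x)\bigr)-N(x\cdot y)=x\cdot N(y)+N(x)\cdot y-N(x\cdot y),
\]
which is exactly the right-hand side above. Hence $\pi_2-\pi_1=\partial^1(N)\in B^2(A;A)$.

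Finally, since both $\Pi_t^1$ and $\Pi_t^2$ are linear deformations, each of $\pi_1$ and $\pi_2$ satisfies \eqref{eq:4.1}, i.e. $\partial^2\pi_i=0$, so that $\pi_1,\pi_2\in Z^2(A;A)$. Their difference being a $1$-coboundary, $\pi_1$ and $\pi_2$ represent the same class in $H^2(A;A)=Z^2(A;A)/B^2(A;A)$, as claimed. The only mildly delicate point is the bookkeeping in the $n=1$ specialization of \eqref{eq:12.1} --- verifying that the summation ranges collapse so that precisely the $\ell$-term, the $r$-term, and the single multiplication term survive while the bracket sum vanishes --- together with checking via \eqref{equi-deformation-0} that $N$ is admissible as a $1$-cochain; both are routine.
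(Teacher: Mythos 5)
Your proof is correct and follows essentially the same route as the paper: expand the homomorphism condition for $T_t=\Id+tN$ in powers of $t$, identify the coefficient of $t$ with $\partial^1(N)$ in the adjoint representation, and conclude $\pi_2-\pi_1\in B^2(A;A)$. The only difference is that you spell out the two details the paper leaves implicit (that $N$ is an admissible $1$-cochain via $N\alpha=\alpha N$, $N\b=\b N$, and that $\pi_1,\pi_2$ are $2$-cocycles so their classes in $H^2(A;A)$ are defined), which is a welcome addition rather than a deviation.
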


\begin{defi}\label{defi:operator}
Let $(A,\cdot,\alpha,\b)$ be a BiHom-left-symmetric algebra. A linear operator $N\in \gl(A)$ is called a Nijenhuis operator on $(A,\cdot,\alpha,\b)$ if $N$ satisfies \eqref{equi-deformation-0} and the equation
      \begin{eqnarray}
        N(x)\cdot N(y)=N(x\cdot_N y),\quad \forall x,y\in A.
         \label{eq:Nij3}
        \end{eqnarray}
        where the product $\cdot_N$ is defined by
\begin{equation}\label{eq:5.2}
x\cdot_N y:=  N(x)\cdot y+x\cdot N(y)-N(x\cdot y).
\end{equation}
\end{defi}

By \eqref{equi-deformation-0}-\eqref{equi-deformation-3}, a trivial linear deformation of a BiHom-left-symmetric algebra gives rise to a Nijenhuis operator $N$. Conversely, a Nijenhuis operator can also generate a trivial linear deformation as the following theorem shows.

\begin{thm}\label{Nij-deformation}
Let $N$ be a Nijenhuis operator on a BiHom-left-symmetric algebra $(A,\Pi,\alpha,\b)$. Then a linear deformation $(A,\Pi_t,\alpha,\b)$ of the BiHom-left-symmetric algebra $(A,\Pi,\a,\b)$ can be obtained by putting
\begin{equation}\label{trivial-deformation-generate}
\pi(x,y)=\partial^1(N)(x,y)=x\cdot_N y.
\end{equation}
Furthermore, the linear deformation $(A,\Pi_t,\a,\b)$ is trivial.
\end{thm}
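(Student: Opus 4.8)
The plan is to reduce the claim to the two cochain-level conditions \eqref{eq:4.1} and \eqref{eq:4.2} that a linear deformation must satisfy, and then to exhibit $T_t=\Id+tN$ as the homomorphism witnessing triviality. First I would note that, since $N\a=\a N$ and $N\b=\b N$ by \eqref{equi-deformation-0}, $N$ is a $1$-cochain for the adjoint representation, $N\in C^1(A;A)$, so $\partial^1 N\in C^2(A;A)$ is well defined. Unwinding \eqref{eq:12.1} at $n=1$ with $L=\ell$, $R=r$ gives
\[
(\partial^1 N)(x,y)=N(x)\cdot y+x\cdot N(y)-N(x\cdot y)=x\cdot_N y,
\]
so $\pi=\partial^1 N=\cdot_N$ as asserted in \eqref{trivial-deformation-generate}; moreover $N\a=\a N$, $N\b=\b N$ together with the multiplicativity of $\a,\b$ show that $\pi$ commutes with $\a\otimes\a$ and $\b\otimes\b$, so $\pi$ is an admissible deformation cochain.

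The cocycle condition \eqref{eq:4.1}, i.e. $\partial^2\pi=0$, is then immediate with no computation: $\pi=\partial^1 N$ is a coboundary, hence $\partial^2\pi=\partial^2\partial^1 N=0$ by Theorem \ref{thm:operator}. This settles the linear term of the deformation equation.

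The substantive step, and the main obstacle, is \eqref{eq:4.2}, namely that $(A,\cdot_N,\a,\b)$ is itself a BiHom-left-symmetric algebra. I would expand the twisted associator
\[
\mathrm{as}_N(x,y,z)=(\b(x)\cdot_N\a(y))\cdot_N\a\b(z)-\a\b(x)\cdot_N(\a(y)\cdot_N z)
\]
by substituting the definition \eqref{eq:5.2} for each $\cdot_N$. The crucial simplification is the Nijenhuis identity \eqref{eq:Nij3}: it lets me replace $N$ applied to any $\cdot_N$-product by $N(\cdot)\cdot N(\cdot)$, which in particular collapses the iterated terms $N\big(N(\cdot)\cdots\big)$ coming from the two inner factors $\b(x)\cdot_N\a(y)$ and $\a(y)\cdot_N z$. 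After collecting, $\mathrm{as}_N(x,y,z)$ reduces to a fixed $\mathbb{K}$-linear combination of the \emph{original} twisted associators $\mathrm{as}(a,b,c)=(\b(a)\cdot\a(b))\cdot\a\b(c)-\a\b(a)\cdot(\a(b)\cdot c)$ of $(A,\cdot,\a,\b)$, evaluated at the arguments $x,y,z$ and their $N$-images, together with their images under $N$ and $N^2$. Suppressing the $\a,\b$ twists, the shape of this combination is
\[
\mathrm{as}(Nx,Ny,z)+\mathrm{as}(Nx,y,Nz)+\mathrm{as}(x,Ny,Nz)-N\big(\mathrm{as}(Nx,y,z)+\mathrm{as}(x,Ny,z)+\mathrm{as}(x,y,Nz)\big)+N^2\,\mathrm{as}(x,y,z).
\]
Each original associator is symmetric in its first two slots by \eqref{id-Bihom-pre}, and one verifies term by term that the whole combination is invariant under $x\leftrightarrow y$; hence $\mathrm{as}_N(x,y,z)=\mathrm{as}_N(y,x,z)$, which is exactly \eqref{eq:4.2}. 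I expect the bookkeeping of the $\a,\b$ twists during these cancellations to be the only delicate point; note also that merely applying $N$ to $\mathrm{as}_N$ yields $\mathrm{as}(Nx,Ny,Nz)$, which only shows symmetry modulo $\ker N$, so the full expansion really is needed to conclude $\mathrm{as}_N$ is symmetric on the nose.

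With \eqref{eq:4.1} and \eqref{eq:4.2} established, $\Pi_t=\Pi+t\pi$ is a linear deformation. For triviality I would set $\pi_1=0$, $\pi_2=\pi$ in \eqref{equi-deformation-1}--\eqref{equi-deformation-3} and check that $T_t=\Id+tN$ is a homomorphism from $(A,\Pi_t,\a,\b)$ to $(A,\Pi,\a,\b)$: condition \eqref{equi-deformation-0} is $N\a=\a N$, $N\b=\b N$; \eqref{equi-deformation-1} reads $\pi(x,y)=x\cdot_N y$, true by the first paragraph; \eqref{equi-deformation-2} reduces to $N(x\cdot_N y)=N(x)\cdot N(y)$, which is precisely \eqref{eq:Nij3}; and \eqref{equi-deformation-3} is $0=0$. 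Equivalently one checks the single identity $T_t(x\cdot_t y)=T_t(x)\cdot T_t(y)$, whose coefficients in $t^0,t^1,t^2$ are respectively trivial, the definition \eqref{eq:5.2}, and \eqref{eq:Nij3}. Since $T_t$ also commutes with $\a$ and $\b$, it is a homomorphism of BiHom-left-symmetric algebras onto $(A,\Pi,\a,\b)$, so the deformation $\Pi_t$ is trivial.
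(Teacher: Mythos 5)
Your proposal is correct and follows essentially the same route as the paper: $\pi=\partial^1N$ is a coboundary (hence a $2$-cocycle), the identity \eqref{eq:4.2} is obtained by expanding the $\cdot_N$-associator, collapsing $N$ of $\cdot_N$-products via \eqref{eq:Nij3}, and invoking the symmetry of the original twisted associator, and triviality is read off from $T_t(x\cdot_t y)=T_t(x)\cdot T_t(y)$ degree by degree in $t$. The only difference is presentational: you package the expansion into the closed-form identity expressing $\mathrm{as}_N$ through $\mathrm{as}$ at $N$-shifted arguments (which is a clean and correct way to organize the cancellation), whereas the paper carries out the same term-by-term grouping directly.
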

\begin{proof}
 Using \eqref{trivial-deformation-generate}, we have $\pi\circ(\alpha\otimes\alpha)=\alpha\circ\pi$, $\pi\circ(\b\otimes\b)=\b\circ\pi$ and $\partial^2\pi=0$. To show that $\pi$ generates a linear deformation of
the BiHom-left-symmetric algebra $(A,\cdot,\a,\b)$, we only need to verify that~\eqref{eq:4.2} holds. This means that, $(A,\pi,\a,\b)$ is  a BiHom-left-symmetric algebra. By simple computations, we obtain
\begin{align*}
&\pi(\pi(\b(x),\a(y)),\b(z))-\pi(\a\b(x),\pi(\a(y),z))-\pi(\pi(\b(y),\a(x)),\b(z))+\pi(\a\b(y),\pi(\a(x),z))\\
=&(N(\b(x))\c N(\a(y))) \c \b(z) +(N(\b(x))\cdot \a(y))\cdot N(\b(z))+(\b(x)\cdot N(\a(y)))\cdot N(\b(z)) \\
-&N(\b(x)\cdot \a(y))\cdot N(\b(z))
-N\big((N(\b(x))\cdot \a(y))\cdot \b(z)\big)-N\big((\b(x)\cdot N(\a(y)))\cdot \b(z)\big) \\
+&N\big(N(\b(x)\cdot \a(y))\cdot \b(z)\big)
-N(\a\b(x))\cdot (N(\a(y))\cdot z)-N(\a\b(x))\cdot (\a(y)\cdot N(z)) \\
+&N(\a\b(x))\cdot N(\a(y)\cdot z)
- \a\b(x)\c (N(\a(y)) \c N(z))
+N\big(\a\b(x)\cdot (N(\a(y))\cdot z)\big) \\
+&N\big(\a\b(x)\cdot (\a(y)\cdot N(z))\big)-N\big(\a\b(x)\cdot N(\a(y)\cdot z)\big)
- (N(\b(y)) \c N(\a(x))) \c \b(z) \\
-&(N(\b(y))\cdot \a(x))\cdot N(\b(z))-(\b(y)\cdot N(\a(x)))\cdot N(\b(z))+N(\b(y)\cdot \a(x))\cdot N(\b(z))\\
+&N\big((N(\b(y))\cdot \a(x))\cdot \b(z)\big)+N\big((\b(y)\cdot N(\a(x)))\cdot \b(z)\big)-N\big(N(\b(y)\cdot \a(x))\cdot \b(z)\big)\\
+&N(\a\b(y))\cdot (N(\a(x))\cdot z)+N(\a\b(y))\cdot (\a(x)\cdot N(z))-N(\a\b(y))\cdot N(\a(x)\cdot z)\\
+& \a\b(y) \c (N(\a(x)) \c N(z))
-N\big(\a\b(y)\cdot (N(\a(x))\cdot z)\big)\\
-&N\big(\a\b(y)\cdot (\a(x)\cdot N(z))\big)+N\big(\a\b(y)\cdot N(\a(x)\cdot z)\big).
\end{align*}

Since $N$ commutes with $\a$ and $\b$, by the definition of a BiHom-left-symmetric algebra, we get
\begin{align*}
&(N(\b(x))\cdot \a(y))\cdot N(\b(z))-N(\a\b(x))\cdot (\a(y)\cdot N(z))\\
 -&(\b(y)\cdot N(\a(x)))\cdot N(\b(z))+\a\b(y)\cdot (N(\a(x))\cdot N(z))=0,\\
 &(N(\b(y))\cdot \a(x))\cdot N(\b(z))-N(\a\b(y))\cdot (\a(x)\cdot N(z))\\
 -&(\b(x)\cdot N(\a(y)))\cdot N(\b(z))+\a\b(x)\cdot (N(\a(y))\cdot N(z))=0,\\
 &(N(\b(x))\cdot N(\a(y)))\cdot \b(z)-N(\a\b(x))\cdot (N(\a(y))\cdot z)\\
-&(N(\b(y))\cdot N(\a(x)))\cdot \b(z)+N(\a\b(y))\cdot (N(\a(x))\cdot z)=0.
\end{align*}

Since $N$ is a Nijenhuis operator, by the definition of a BiHom-left-symmetric algebra, we obtain
{\small
\begin{align*}
-&N(\b(x)\cdot \a(y))\cdot N(\b(z))+N(\a\b(x))\cdot N(\a(y)\cdot z)+N(\b(y)\cdot \a(x))\cdot N(\b(z))-N(\a\b(y))\cdot N(\a(x)\cdot z)\\
=&-N\big((\b(x)\cdot \a(y))\cdot_N \b(z)\big)+N\big(\a\b(x)\cdot_N (\a(y)\cdot z)\big)
+N\big((\b(y)\cdot \a(x))\cdot_N \b(z)\big)-N\big(\a\b(y)\cdot_N (\a(x)\cdot z)\big)\\
=&-N\big(N(\b(x)\cdot \a(y))\cdot \b(z)\big)-N\big((\b(x)\cdot \a(y))\cdot N(\b(z))\big)
+N\big(N(\a\b(x))\cdot (\a(y)\cdot z)\big)+N\big(\a\b(x)\cdot N(\a(y)\cdot z)\big)\\
+&N\big(N(\b(y)\cdot \a(x))\cdot \b(z)\big)+N\big((\b(y)\cdot \a(x))\cdot N(\b(z))\big)
-N\big(N(\a\b(y))\cdot (\a(x)\cdot z)\big)-N\big(\a\b(y)\cdot N(\a(x)\cdot z)\big).
\end{align*}}
Therefore, we have
{\small
\begin{align*}
&\pi(\pi(\b(x),\a(y)),\b(z))-\pi(\a\b(x),\pi(\a(y),z))-\pi(\pi(\b(y),\a(x)),\b(z))+\pi(\a\b(y),\pi(\a(x),z))\\
=&-N\big((\b(x)\cdot \a(y))\cdot N(\b(z))\big)+N\big(\a\b(x)\cdot (\a(y)\cdot N(z))\big)
+N\big((\b(y)\cdot \a(x))\cdot N(\b(z))\big) \\
-&N\big(\a\b(y)\cdot (\a(x)\cdot N(z))\big)
-N\big((N(\b(x))\cdot \a(y))\cdot \b(z)\big)+N\big(N(\a\b(x))\cdot (\a(y)\cdot z)\big) \\
+&N\big((\b(y)\cdot N(\a(x)))\cdot \b(z)\big)-N\big(\a\b(y)\cdot (N(\a(x))\cdot z)\big)
+N\big((N(\b(y))\cdot \a(x))\cdot \b(z)\big) \\
-&N\big(N(\a\b(y))\cdot (\a(x)\cdot z)\big)
-N\big((\b(x)\cdot N(\a(y)))\cdot \b(z)\big)+N\big(\a\b(x)\cdot (N(\a(y))\cdot z)\big)
=0.
\end{align*}
}
Thus, $\pi$ generates a linear deformation of the BiHom-left-symmetric algebra $(A,\cdot,\a,\b)$.

We define $T_t={\Id}+tN$. It is straightforward to deduce that $T_t={\Id}+tN$ is a BiHom-left-symmetric algebra homomorphism from $(A,\Pi_t,\a,\b)$ to  $(A,\Pi,\a,\b)$. Thus, the deformation generated by $\pi=\partial^1(N)$ is trivial.
\end{proof}

By \eqref{eq:Nij3} and Theorem \ref{Nij-deformation}, we have the following corollary.
\begin{cor}
Let $N$ be a Nijenhuis operator on a BiHom-left-symmetric algebra $(A,\pi,\a,\b)$, then $(A,\cdot_N,\a,\b)$ is a BiHom-left-symmetric algebra, and $N$ is a homomorphism from $(A,\cdot_N,\a,\b)$ to $(A,\cdot,\a,\b)$.
\end{cor}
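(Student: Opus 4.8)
The plan is to deduce both assertions directly from the definition of a Nijenhuis operator together with Theorem~\ref{Nij-deformation}, rather than re-running the long left-symmetry computation.

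First I would dispose of the homomorphism claim, which is essentially formal. A morphism of BiHom-left-symmetric algebras is characterized by \eqref{bihomo-1}--\eqref{bihomo-3}. For the map $N\colon (A,\cdot_N,\a,\b)\to (A,\cdot,\a,\b)$, condition \eqref{bihomo-1} reads $N(x\cdot_N y)=N(x)\cdot N(y)$, which is exactly the defining identity \eqref{eq:Nij3} of a Nijenhuis operator. Conditions \eqref{bihomo-2}--\eqref{bihomo-3}, namely $N\circ\a=\a\circ N$ and $N\circ\b=\b\circ N$, are precisely \eqref{equi-deformation-0}, which is also part of the Nijenhuis hypothesis. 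Hence $N$ is a homomorphism with no further work.

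Next I would establish that $(A,\cdot_N,\a,\b)$ is itself a BiHom-left-symmetric algebra. I would first check the structural compatibilities $\a(x\cdot_N y)=\a(x)\cdot_N\a(y)$ and $\b(x\cdot_N y)=\b(x)\cdot_N\b(y)$: expanding $\cdot_N$ via \eqref{eq:5.2} and using that $\a,\b$ are morphisms for $\cdot$ together with $N\a=\a N$ and $N\b=\b N$ from \eqref{equi-deformation-0}, every term transports through $\a$ (resp. $\b$), giving the claim termwise. The commutation $\a\circ\b=\b\circ\a$ is inherited unchanged.

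The remaining, and only substantive, point is the left-symmetry identity for $\cdot_N$, which is exactly \eqref{eq:4.2} with $\pi(x,y)=x\cdot_N y$. But this is already furnished by Theorem~\ref{Nij-deformation}: there $\pi(x,y)=x\cdot_N y$ is shown to generate a linear deformation of $(A,\cdot,\a,\b)$, and as remarked immediately after \eqref{eq:4.2}, the validity of \eqref{eq:4.2} is equivalent to $(A,\pi,\a,\b)$ being a BiHom-left-symmetric algebra. Thus $(A,\cdot_N,\a,\b)$ satisfies all the axioms. The main obstacle is therefore not in the present argument at all but has been discharged inside Theorem~\ref{Nij-deformation}; were one to prove the corollary in isolation, the real work would be the direct verification of \eqref{eq:4.2} for $\cdot_N$, which reduces---after repeatedly substituting \eqref{eq:5.2} and using \eqref{eq:Nij3} to absorb the terms quadratic in $N$---to the left-symmetry identity of the original product $\cdot$.
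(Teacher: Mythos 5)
Your argument is correct and is essentially the paper's own: the paper derives the corollary by citing the Nijenhuis identity \eqref{eq:Nij3} (which gives the homomorphism property together with \eqref{equi-deformation-0}) and Theorem \ref{Nij-deformation} (which establishes \eqref{eq:4.2} for $\pi(x,y)=x\cdot_N y$, i.e.\ the left-symmetry of $\cdot_N$). You merely spell out the routine $\a$-, $\b$-multiplicativity check that the paper leaves implicit.
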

Now, we recall linear deformations of BiHom-Lie algebras and Nijenhuis operators on BiHom-Lie algebras, which give trivial deformations of BiHom-Lie algebras.
\begin{defi}\label{defi:deformation}
Let $(\g,[\cdot,\cdot],\a,\b)$ be a BiHom-Lie algebra and $\pi \in \huaC^2(\g;\g)$ a skew-symmetric bilinear operator, satisfying $\pi\circ (\a\otimes\a)=\a\circ \pi$ and $\pi\circ (\b\otimes\b)=\b\circ \pi$. Consider a $t$-parameterized family of bilinear operations, $$[\cdot,\cdot]_t=[\cdot,\cdot]+t\pi.$$ If $(\g,[\cdot,\cdot]_t,\a,\b)$ is a BiHom-Lie algebra for all $t$, we say that $\pi$ generates a (one-parameter) linear deformation of a BiHom-Lie algebra $(\g,[\cdot,\cdot],\a,\b)$.
    \end{defi}
\begin{defi}
Let $(\g,[\cdot,\cdot],\a,\b)$ be a BiHom-Lie algebra. A linear operator $N\in \gl(A)$ is called a Nijenhuis operator on $(\g,[\cdot,\cdot],\a,\b)$ if we have
\begin{eqnarray}
\label{homo-4}N\circ \alpha&=&\alpha\circ N,\\
\label{homo-5}[N(x),N(y)]&=&N[x,y]_N,\quad \forall x,y\in \g.
\end{eqnarray}
where the bracket $[\cdot,\cdot]_N$ is defined by
\begin{equation}
[x,y]_N\triangleq  [N(x),y]+[x,N(y)]-N[x,y].
\end{equation}
    \end{defi}
\begin{pro}
If $\pi \in C^2(A;A)$ generates a linear deformation of a BiHom-left-symmetric algebra $(A,\cdot,\a,\b)$, then $\pi_C \in \huaC^2(A^C;A)$ defined by$$\pi_C(x,y)=\pi(x,y)-\pi(\a^{-1}\b(y),\a\b^{-1}(x))$$ generates a linear deformation of the sub-adjacent BiHom-Lie algebra $A^C$.
\end{pro}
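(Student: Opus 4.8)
The plan is to exploit the fact that the passage from a BiHom-left-symmetric multiplication to its sub-adjacent BiHom-Lie bracket is given by a \emph{fixed linear} formula, namely the (BiHom-)commutator; consequently this passage is automatically compatible with linear deformations. Concretely, as recalled at the beginning of Section~2, for a BiHom-left-symmetric multiplication $\mu$ on $A$ the bracket $[x,y]_\mu=\mu(x,y)-\mu(\a^{-1}\b(y),\a\b^{-1}(x))$ makes $(A,[\cdot,\cdot]_\mu,\a,\b)$ a BiHom-Lie algebra. I will apply this to the deformed product and read off $\pi_C$ as the commutator of $\pi$.

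First I would feed the deformed multiplication into the commutator construction. By hypothesis $\Pi_t=\Pi+t\pi$ is a BiHom-left-symmetric multiplication for every $t$, and it uses the \emph{same} structure maps $\a,\b$; hence its commutator
$$[x,y]_C^t:=\Pi_t(x,y)-\Pi_t(\a^{-1}\b(y),\a\b^{-1}(x))$$
is a BiHom-Lie bracket on $A$ for every $t$. Expanding $\Pi_t=\Pi+t\pi$ and using bilinearity in the parameter $t$ yields
$$[x,y]_C^t=[x,y]_C+t\big(\pi(x,y)-\pi(\a^{-1}\b(y),\a\b^{-1}(x))\big)=[x,y]_C+t\,\pi_C(x,y),$$
so that $[\cdot,\cdot]_C^t=[\cdot,\cdot]_C+t\,\pi_C$. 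Thus $[\cdot,\cdot]_C+t\,\pi_C$ is a BiHom-Lie bracket for all $t$, which is exactly the statement that $\pi_C$ generates a linear deformation of $A^C$ in the sense of Definition~\ref{defi:deformation}, once one checks that $\pi_C$ is an admissible cochain.

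It therefore remains to verify that $\pi_C\in\huaC^2(A^C;A)$, i.e. that $\pi_C$ is BiHom-skew-symmetric and correctly equivariant. For skew-symmetry, using $\a\b=\b\a$ one simplifies $\pi_C(\b(x),\a(y))=\pi(\b(x),\a(y))-\pi(\b(y),\a(x))$, which is manifestly antisymmetric under interchanging $x$ and $y$, so $\pi_C(\b(x),\a(y))=-\pi_C(\b(y),\a(x))$. For the twisting, since $\pi\circ(\a\otimes\a)=\a\circ\pi$ and $\a,\b$ commute, one obtains $\pi_C(\a(x),\a(y))=\a\,\pi_C(x,y)$, and identically $\pi_C\circ(\b\otimes\b)=\b\circ\pi_C$; these are precisely the requirements in Definition~\ref{defi:deformation}.

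There is no serious obstacle here: the genuine content is the structural observation of the first two paragraphs, that the commutator is linear in the multiplication and hence intertwines deformations of the BiHom-left-symmetric structure with deformations of the sub-adjacent BiHom-Lie structure. The only mild care required is bookkeeping of the $\a^{-1}\b$ and $\a\b^{-1}$ twists when verifying the BiHom-skew-symmetry and equivariance of $\pi_C$, since that is the one place where a sign or a misplaced twist could slip in.
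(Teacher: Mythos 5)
Your proof is correct and follows essentially the same route as the paper: apply the sub-adjacent commutator construction to the deformed multiplication $\Pi_t$, expand by linearity in $t$, and read off $[\cdot,\cdot]_t=[\cdot,\cdot]_C+t\,\pi_C$. The extra verification that $\pi_C$ is BiHom-skew-symmetric and commutes with $\a$ and $\b$ is a harmless (and welcome) addition that the paper leaves implicit.
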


\begin{proof}
Assume that $\pi$ generates a linear deformation of a BiHom-left-symmetric algebra $(A,\cdot,\a,\b)$. Then $(A,\Pi_t,\a,\b)$ is a BiHom-left-symmetric algebra. Consider its corresponding sub-adjacent BiHom-Lie algebra $(A,[\cdot,\cdot]_t,\a,\b)$, we have
\begin{eqnarray*}
  [x,y]_t&=&\P_t(x,y)-\P_t(\a^{-1}\b(y),\a\b^{-1}(x))\\
  &=&x\cdot y+t\pi(x,y)-\a^{-1}\b(y)\cdot \a\b^{-1}(x)-t\pi(\a^{-1}\b(y),\a\b^{-1}(x))\\
  &=&[x,y]_C+t\p_C(x,y).
\end{eqnarray*}
Thus, $\p_C$ generates a linear deformation of $A^C$.
\end{proof}

\begin{pro}
If $N$ is a Nijenhuis operator on a BiHom-left-symmetric algebra $(A,\cdot,\a,\b)$, then $N$ is a Nijenhuis operator on the subadjacent BiHom-Lie algebra $A^C$.
\end{pro}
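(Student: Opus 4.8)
The plan is to verify directly that $N$ satisfies the two defining conditions of a Nijenhuis operator on the BiHom-Lie algebra $A^C$, namely that $N$ commutes with the structure maps and that $[N(x),N(y)]_C = N\big([N(x),y]_C+[x,N(y)]_C-N[x,y]_C\big)$ for all $x,y\in A$. The commuting condition is free: equation \eqref{equi-deformation-0} already gives $N\circ\a=\a\circ N$ and $N\circ\b=\b\circ N$, which are precisely the compatibilities with the structure maps of $A^C$ (these are the same $\a,\b$). So the entire content lies in the bracket identity.

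The key observation I would isolate first is that the deformed bracket on $A^C$ is nothing but the commutator bracket (in the BiHom sense) of the deformed product $\cdot_N$ from \eqref{eq:5.2}. Concretely, expanding $[N(x),y]_C+[x,N(y)]_C-N[x,y]_C$ by means of $[a,b]_C=a\cdot b-\a^{-1}\b(b)\cdot\a\b^{-1}(a)$, and then commuting $N$ through the twists (so that $N\a\b^{-1}=\a\b^{-1}N$ and $N\a^{-1}\b=\a^{-1}\b N$), every term regroups exactly into $x\cdot_N y-\a^{-1}\b(y)\cdot_N\a\b^{-1}(x)$. This step is pure bookkeeping with the twists and uses only \eqref{equi-deformation-0}; it yields the identity $[N(x),y]_C+[x,N(y)]_C-N[x,y]_C=x\cdot_N y-\a^{-1}\b(y)\cdot_N\a\b^{-1}(x)$.

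With this in hand I would finish by computing the left-hand side. From the commutator formula, $[N(x),N(y)]_C=N(x)\cdot N(y)-\a^{-1}\b(N(y))\cdot\a\b^{-1}(N(x))$, and commuting $N$ past $\a,\b$ once more turns this into $N(x)\cdot N(y)-N(\a^{-1}\b(y))\cdot N(\a\b^{-1}(x))$. Now I apply the BiHom-left-symmetric Nijenhuis condition, which by \eqref{eq:Nij3} and \eqref{eq:5.2} reads $N(a)\cdot N(b)=N(a\cdot_N b)$, to each of the two products. This produces $N\big(x\cdot_N y\big)-N\big(\a^{-1}\b(y)\cdot_N\a\b^{-1}(x)\big)=N\big(x\cdot_N y-\a^{-1}\b(y)\cdot_N\a\b^{-1}(x)\big)$, and by the previous paragraph the argument of $N$ is exactly $[N(x),y]_C+[x,N(y)]_C-N[x,y]_C$. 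Hence $[N(x),N(y)]_C=N\big([N(x),y]_C+[x,N(y)]_C-N[x,y]_C\big)$, which is the required Nijenhuis identity on $A^C$.

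I do not expect a genuine obstacle here: the argument is conceptual once one notices that passing to commutators intertwines the left-symmetric deformation $\cdot_N$ with the Lie deformation $[\cdot,\cdot]_N$, so that the left-symmetric Nijenhuis condition is transported verbatim to the bracket. The only point demanding care is the systematic use of $N\a=\a N$ and $N\b=\b N$ to move $N$ through the twisted arguments $\a^{-1}\b(\cdot)$ and $\a\b^{-1}(\cdot)$; getting these exponents right is the sole source of potential error.
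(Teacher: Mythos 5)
Your proposal is correct and follows essentially the same route as the paper: commute $N$ through the twists $\a^{-1}\b$ and $\a\b^{-1}$ using \eqref{equi-deformation-0}, apply the Nijenhuis identity $N(a)\cdot N(b)=N(a\cdot_N b)$ to each of the two products in $[N(x),N(y)]_C$, and regroup the resulting six terms into $N([N(x),y]_C+[x,N(y)]_C-N[x,y]_C)$. The only difference is presentational --- you isolate the regrouping identity as a separate lemma-like step, while the paper carries it out in a single chain of equalities.
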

\begin{proof}
Since $N$ is a Nijenhuis operator on $(A,\cdot,\a,\b)$, we have $N\circ \a=\a\circ N$ and $N \circ \b=\b \circ N$. For all $x,y\in A$, by Definition \ref{defi:operator}, we have
\begin{align*}
 & [N(x),N(y)]_C=N(x)\cdot N(y)-\a^{-1}\b(N(y))\cdot \a\b^{-1}(N(x))\\
  =&N\big(N(x)\cdot y+x\cdot N(y)-N(x\cdot y)-N(\a^{-1}\b(y))\cdot \a\b^{-1}(x)-\a^{-1}\b(y)\cdot N(\a\b^{-1}(x))+N(\a^{-1}\b(y)\cdot \a\b^{-1}(x))\big)\\
  =&N([N(x),y]_C+[x,N(y)]_C-N[x,y]_C).
\end{align*}
Thus, $N$ is a Nijenhuis operator on $A^C$.
\end{proof}


\end{document}